\providecommand{\U}[1]{\protect\rule{.1in}{.1in}}
\newtheorem{theorem}{Theorem}[section]
\newtheorem{corollary}[theorem]{Corollary}
\newtheorem{definition}[theorem]{Definition}
\newtheorem{proposition}[theorem]{Proposition}
\newtheorem{remark}[theorem]{Remark}
\newcommand{\diag}{\mathop{\rm diag}\nolimits}
\newcommand{\R}{\ensuremath{\mathbb{R}}}
\begin{document}
\title{Observability on the classes of non-nilpotent solvable three-dimensional Lie groups}
\author{Victor Ayala\\ Instituto de Alta Investigación, Universidad de Tarapac\'a \\ Arica, Chile \\vayala@academicos.uta.cl
\and Thiago Matheus Cavalheiro\\Departamento de Matem\'{a}tica, Universidade Estadual do Norte do Paraná \\ Jacarezinho, Brazil \\thiago\_cavalheiro@hotmail.com 
\and Alexandre J. Santana\\Departamento de Matem\'{a}tica, Universidade Estadual de Maring\'{a}\\ Maring\'a, Brazil \\ajsantana@uem.br\\ 
}
\maketitle




\begin{abstract}

In control theory, researchers need to understand a system's local and global behaviors in relation to its initial conditions. When discussing observability, the main focus is on the ability to analyze the system using an output space defined by an output map.

In this study, our objective was to establish conditions for characterizing the observability properties of linear control systems on Lie groups. We will focus on five classes of solvable, non-nilpotent three-dimensional Lie groups, examining local and global perspectives. This analysis explores the kernels of homomorphisms between the state space and its simply connected subgroups, where the output is projected onto the quotient space.
\end{abstract}

\section{Introduction}

In the 1960s, Kalman significantly advanced in studying control systems within Euclidean spaces. His research focused on essential topics such as controllability, observability, and stability, which are crucial to understanding the dynamics of control systems and their interrelationships. These areas laid the foundation for modern control theory and opened the door to advancements in various applications in engineering and technology.

As a main example of this era, consider the linear control system 
\begin{equation*}\Sigma_{\R^n}:\left\{
    \begin{array}{ccl}
         \dot{x} &=& Ax + Bu,  \\
         y &=& Cx, 
    \end{array}\right.
\end{equation*}
where $A \in \R^{n \times n}, B \in \R^{n \times m}$, $C \in \R^{n \times l}$, with $l < n$, and $\mathcal{U} =L_{l o c}^{1} (\Omega )\text{,}$ is the set of admissible controls, that is, the family of locally integrable functions $u :[0,T_{u}] \rightarrow \Omega  \subset $$\mathbb{R}^{m}$. The set $\Omega $ is closed and $0 \in i n t (\Omega )$. If $\Omega  =\mathbb{R}^{m}\text{,}$ the system is called unrestricted. Otherwise, $\Sigma _{G}$\ is restricted, \cite{sontag}.

Starting with an initial condition $x_0\in\mathbb{R}^n$ and a specific control $u\in\mathcal{U}$, we can fully describe the solution to the system as follows:

$$\phi^u_t(x_0)=e^{tA}\left(x_0+\int^t_0e^{-\tau AB}u(\tau) d\tau\right),$$

which satisfies the Cauchy problem with initial value $\dot{x}=Ax+Bu$, $x(0)=x_0$. Thus, $\phi^u_t(x_0)$ with $t\in\mathbb{R}$ describes a curve in $\mathbb{R}^d$ such that starting from $x_0$ the elements on the curve are reached from $x_0$ forward and backward through the specific dynamics of the linear system determined by the control.\\

Next, we introduce the concept of unobservability, which allows us to decompose the state space into equivalence classes. These classes group elements that cannot be distinguished from each other by the observation function \(h\) through the system dynamics.
 \begin{definition}
 Two states $x_0$, $x_1 $$ \in \mathbb{R}^n$ are indistinguishable by $(\Sigma_{\R^n})$, denoted by $x_0 I x_1$, if
 \[x_1-x_0\in ker(Ce^{tA}),\forall t\geq0.\]
 \end{definition}

\begin{remark}In fact, under the definition´s condition the observation function \( h \) does not differentiate between states \( x_0 \) and \( x_1 \) across the system, resulting in identical outcomes for each control $u\in\mathcal{U}$ and for any $t>0,$
  \[C\left( e^{tA}(x_0+\int_0^t e^{-\tau A}Bu(\tau)d\tau)\right)=C\left(e^{tA}(x_1+\int_0^t e^{-\tau A}Bu(\tau)d\tau)\right).\]
It turns out that,
  \begin{proposition}
  \begin{enumerate}
  Let us consider a linear control system on the Euclidean space   $\mathbb{R}^n$. Therefore, 
  \item $I$  is an equivalence relation
  \item If $I(x)$ denotes the equivalence class $x$ by the relation $I$, then,
  \begin{itemize}
 \item[a)] $I(0)=\bigcap_{t\geq 0}^{n-1} ker(Ce^{tA})$
 \item[b)] $I(x)=x+I(0)$.
  \end{itemize}
 \end{enumerate}
 \end{proposition}

So, the system is said to be observable if the equivalence class of the origin is trivial.
\end{remark}

From the previous analysis, the system $(\Sigma_{\R^n})$ is observable if and only if the matrix

\[
 \mathcal{O}=\left( \begin{array}{rrrrr}
C & CA & CA^2&\cdots& CA^{n-1} \end{array} \right)^T,
 \]

has maximum rank. See, for example, \cite{sontag}.

This concept was extended to Lie groups in
\cite{Ayalaetal}, see also \cite{AyalaHaci}.
The fundamental idea remained unchanged, but these two papers' findings were significantly more straightforward than the original. Furthermore, they provide necessary and sufficient conditions, which makes the reasoning even easier to follow. Currently, this concept is frequently applied in nonlinear ordinary differential equation ODE systems \cite{Aouadi}, control networks \cite{JiangWang}, dynamical systems \cite{Sivalingam}, and probabilistic systems \cite{ZhouGuoLiu}. 

Precisely, let $G$ be a Lie group with Lie algebra $\mathfrak{g}$.
$A\; l i n e a r\; c o n t r o l\; s y s t e m\; \Sigma _{G}$ on $G$ is determined by the family,
\begin{equation*}\Sigma _{G} :\text{}\overset{ \cdot }{g (t)}\text{} =\mathcal{X} (g (t)) +\sum _{j =1}^{m}u_{j} (t) Y^{j} (g (t)) ,\text{}g (t) \in G ,\text{}t \in \mathbb{R}\text{} ,u \in \mathcal{U}\text{,}
\end{equation*}of ordinary differential equations parametrized by the class $\mathcal{U} =L_{l o c}^{1} (\Omega )\text{,}$ as before.

The drift $\mathcal{X}$ is a linear vector field, meaning that its flow is a one-parameter group of automorphisms of the group, exactly as in the Euclidean space. And, for any $j\text{,}$ the control vector $Y^{j} \in \mathfrak{g}$, is considered as left-invariant vector field. We observe that any column vector of the matrix $B$ induces an invariant vector field on the Abelian Lie group $\mathbb{R}^n$.

From the orbit theorem of Sussmann, \cite{sontag}, without lost of generality we assume $\Sigma _{G}$ satisfy the Lie algebra rank condition (\emph{$L A R C)$}, which means: for any $g \in G\text{,}$\begin{equation}S p a n_{\mathcal{L} A} \left \{\mathcal{X} ,Y^{1} ,\ldots  ,Y^{m}\right \}(g) =T_{g} G\text{.}
\end{equation}Denote by $\varphi  (g ,u ,t)$ the solution of $\Sigma _{G}$ associated to the control $u$ with initial condition $g$ at the time $t\text{.}$ It turns out that, \cite{AyalaandAdriano} \begin{equation}\varphi  (g ,u ,t) =\mathcal{X}_{t} (g) \varphi  (e ,u ,t)\text{.}
\end{equation}
Thus, to compute the system's solution on $G$ through an initial condition $g$, we need to translate the solution through the identity element by the flow of the linear vector field acting on $g\text{.}$ 

Just observe the symmetry with the solution of a classical linear system on Euclidean spaces
\begin{equation}\phi  (x ,u ,t) =e^{t A} \left (x +\int \nolimits_{0}^{t}e^{ -\tau  A}\text{}B\text{}u \left (\tau \right ) d \tau \right )\text{.}
\end{equation}

This symmetry helps us characterize the observability properties on the group $G$ by applying concepts from classical linear systems in $\mathbb{R}^n$. However, it is necessary to introduce the notion of local observability, defined by a distribution determined by the Lie algebra associated with the indistinguishable class of the identity element $e$ of the group.

In this work, we utilize the classification of non-nilpotent solvable Lie groups outlined in \cite{onish} and the construction of linear systems on three-dimensional affine Lie groups described in \cite{AyalaandAdriano}. By applying the results from \cite{Ayalaetal}, we aim to list every simply connected subgroup of these groups and develop a general expression for the homomorphisms between the groups and the relevant sets. Finally, we establish the conditions under which the system is locally observable and observable.

\section{Preliminaries and Initial Results}

Let $G$ be a finite-dimensional Lie group with Lie algebra $\mathfrak{g}$. We use the results in \cite{Ayalaetal} and \cite{AyalaHaci} to characterize observability in our context, i.e., on a general pair coming from the drift and a projection onto a homogeneous space of $G$. In the classical linear control system, observability depends on the drift and the linear output map between finite dimensional vector spaces.
We start with the definition of the drift.

\begin{definition} A vector field $\mathcal{X}$ in $G$ is said to be linear if its flow $(\varphi_t)_{t \in \R}$ is a $1-$parameter subgroup of $\hbox{Aut}(G)$, the Lie group of automorphisms of $G$.
\end{definition}

In addition, to every linear vector field $\mathcal{X}$ of $G$, there is a derivation $\mathcal{D} \in \hbox{Der}(\mathfrak{g})$, satisfying 
\begin{equation*}
    -\mathcal{D}(Y) = [\mathcal{X},Y],
\end{equation*}
for every $Y \in \mathfrak{g}$. Recall that a derivation is a linear map that satisfies the Leibniz rule concerning the Lie bracket. The relationship between $\varphi_t$ and $\mathcal{D}$ is given by the formula \begin{equation*}
    d(\varphi_t)_e = e^{t\mathcal{D}}. 
\end{equation*}

\begin{remark}By the general theory of Lie groups (see \cite{SanMartin}, for instance), any closed subgroup $K$ induces a well-defined homogeneous space $G/K$. Therefore, it is possible to consider a canonical projection  $\pi_K: G \longrightarrow G/K$ as an output map.    
\end{remark}

We are willing to introduce the primary definition see \cite{AyalaHaci}. 

\begin{definition}A pair $(\mathcal{X}, \pi_K)$ in $G$ is determined by a linear vector field $\mathcal{X} $ and by a closed subgroup $K$ of $G$.     
\end{definition}

The main focus of this paper is the observability properties on Lie groups.

\begin{definition}A pair $(\mathcal{X},\pi_K)$ is said to be:
\begin{itemize}
    \item[1.] observable at $x_1$ if for all $x_2 \in G\setminus\{x_1\}$, there exists a $t \geq 0$ such that, $$\pi_K(\varphi_t(x_1)) \neq \pi_K(\varphi_t(x_2))$$
    \item[2.] locally observable at $x_1$ if there exists a neighborhood of $x_1$ such that the condition $1.$ is satisfied for each $x$ in the neighborhood.
    \item[3.] observable (locally observable) if it is observable (locally observable) for every $x \in G$. 
\end{itemize}
\end{definition}

In other words, two states $g_1, g_2\in G$ are said to be indistinguishbles if:
\[\mathcal{X}_t(g_1g_2^{-1})\in K\text{ }\forall t\geq 0.\]
Furthermore,

\[I=\{g\in G: \mathcal{X}_t(g)\in K,\forall t \in \R\}\]

is a closed normal subgroup of $G$, and the equivalence class of $g$ is $Ig.$

Next, we establish the main results to apply in our context (see \cite[Theorem 2.5]{AyalaHaci}).\\

The pair $(\mathcal{X}, \pi_K)$ in $G$ is observable if and only if, the Lie group $I$ is discrete and, $$Fix(\varphi) \cap K = \{e\}.$$ 

Notice that the first condition indicates local observability, characterized by the trivialization of the Lie algebra of $I$.

We conclude this section by proving a couple of propositions that will be useful later on.

\begin{remark}\label{samekernel} Consider $G$ a connected Lie group and $H_1,H_2 \subset G$ subgroups of $G$. Assume that $h_i: G \longrightarrow H_i$ for $i = 1,2$ are two homomorphisms whose respective kernels are given by $K_1$ and $K_2$. If $K_1 = K_2$, then $(\mathcal{X}, \pi_{K_1})$ is observable if, and only if, $(\mathcal{X}, \pi_{K_2})$ is observable. 

The reasoning behind showing this statement is direct. Since for $i=1,2,$  $$I_i = \{p \in G: \varphi_t(p) \in K_i, \forall t \in \R\},$$  it turns out that $I_1 = I_2$. And, $Fix(\varphi) \cap K_1 = Fix(\varphi) \cap K_2$.
\end{remark}

Consider $\mathcal{X}$ and $\mathcal{Y}$ linear vector fields defined over a Lie group $G$ with respective flows $\varphi$ and $\psi$. We say that $\mathcal{X}$ and $\mathcal{Y}$ are \textit{conjugated (or $\pi-$conjugated)} if there is an automorphism $\pi: G \longrightarrow G$ such that 
\begin{equation}\label{conjug}
\begin{array}{ccc}
     \pi(\varphi_t(x)) &=& \psi_t(\pi(x)), \forall t \in \R\\
    \varphi_t(\pi^{-1}(x)) &=& \pi^{-1}(\psi_t(x))), \forall t \in \R.  
\end{array}
\end{equation}

The following proposition will be helpful for our purposes. 

\begin{proposition}\label{conjug1}Consider a $\pi-$conjugation between the linear vector fields $\mathcal{X}$ and $\mathcal{Y}$ on $G$ and $h: G \longrightarrow G$ a homomorphism. If $K = \ker(h)$ and $S = \ker(h \circ \pi^{-1})$, then $(\mathcal{X}, \pi_K)$ is observable if, and only if, $(\mathcal{Y}, \pi_S)$ is observable. 
\end{proposition}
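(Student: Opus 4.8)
The plan is to reduce everything to the observability criterion recalled just before the statement (\cite[Theorem 2.5]{AyalaHaci}): the pair $(\mathcal{X},\pi_K)$ is observable if and only if the indistinguishable subgroup $I_{\mathcal{X},K} = \{g \in G : \varphi_t(g) \in K,\ \forall t \in \R\}$ is discrete and $Fix(\varphi) \cap K = \{e\}$, and likewise $(\mathcal{Y},\pi_S)$ is observable iff $I_{\mathcal{Y},S} = \{g \in G : \psi_t(g) \in S,\ \forall t \in \R\}$ is discrete and $Fix(\psi) \cap S = \{e\}$. Since $\pi$ is an automorphism of $G$, hence a homeomorphism fixing $e$, it suffices to transport each of these two conditions across $\pi$.

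The algebraic fact I would record first is that $S = \pi(K)$. Indeed, $s \in S = \ker(h \circ \pi^{-1})$ means $h(\pi^{-1}(s)) = e$, i.e. $\pi^{-1}(s) \in \ker(h) = K$, i.e. $s \in \pi(K)$. With this in hand I would show $\pi(I_{\mathcal{X},K}) = I_{\mathcal{Y},S}$. Fix $g \in G$ and put $p = \pi(g)$. The conjugation relation \eqref{conjug} gives $\psi_t(p) = \pi(\varphi_t(g))$, whence $\pi^{-1}(\psi_t(p)) = \varphi_t(g)$, so that $\psi_t(p) \in S = \pi(K)$ if and only if $\varphi_t(g) \in K$. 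Quantifying over $t \in \R$ yields $p \in I_{\mathcal{Y},S}$ iff $g \in I_{\mathcal{X},K}$, which is precisely the claimed equality. Because $\pi$ is a homeomorphism, $I_{\mathcal{X},K}$ is discrete if and only if $I_{\mathcal{Y},S}$ is discrete, so the first (local observability) hypothesis transfers.

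For the fixed-point condition I would note that $\pi(Fix(\varphi)) = Fix(\psi)$: if $\varphi_t(g) = g$ for all $t$, then $\psi_t(\pi(g)) = \pi(\varphi_t(g)) = \pi(g)$, so $\pi(g) \in Fix(\psi)$, and the reverse inclusion follows identically from the second identity in \eqref{conjug}. Combining this with $S = \pi(K)$ and the injectivity of $\pi$, I obtain $\pi(Fix(\varphi) \cap K) = \pi(Fix(\varphi)) \cap \pi(K) = Fix(\psi) \cap S$. Since $\pi(e) = e$ and $\pi$ is bijective, $Fix(\varphi) \cap K = \{e\}$ holds if and only if $Fix(\psi) \cap S = \{e\}$. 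Both hypotheses of the criterion thus hold simultaneously, and the equivalence of observability follows.

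The content here is light, so rather than a genuine obstacle the only point needing care is keeping the two conjugation identities aligned so that $\pi$ (and not $\pi^{-1}$) carries the $\mathcal{X}$-data to the $\mathcal{Y}$-data. The identity $S = \pi(K)$ is what makes the two fixed-point intersections correspond cleanly, and it is the one elementary fact I would verify at the outset; everything else is a direct application of the stated criterion together with the fact that $\pi$ is an automorphism.
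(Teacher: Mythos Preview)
Your proof is correct and follows essentially the same strategy as the paper's: transport both conditions of the observability criterion across $\pi$ by showing $\pi(I_{\mathcal{X},K}) = I_{\mathcal{Y},S}$ and that the fixed-point condition transfers. Your version is a bit more streamlined because you isolate the identity $S = \pi(K)$ at the outset and use it to handle both the indistinguishable sets and the fixed-point intersection in one stroke, whereas the paper verifies the two inclusions and the fixed-point equivalence by direct element-chasing without ever naming that identity.
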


\begin{proof} As before, we denote the indistinguishable classes of the identity element 
\begin{equation*}
    I_1 = \{x \in G: \varphi_t(x) \in K, \forall t \in \R\}. 
\end{equation*}
and 
\begin{equation*}
    I_2 = \{y \in G: \psi_t(y) \in S, \forall t \in \R\}. 
\end{equation*}

Taking $x \in I_2$, we get $\psi_t(x) \in S$, for all $t \in \R$. Also, there is a $y \in G$ such that $\pi(y) = x$. Therefore, 
\begin{equation*}
    \psi_t(x) = \psi_t(\pi(y)) = \pi(\varphi_t(y)), \forall t \in \R, 
\end{equation*}
with $(h \circ \pi^{-1})(\psi_t(x) ) = e$, for all $t \in \R$. Applying $h \circ \pi^{-1}$ in the equality above, we get 
\begin{equation*}
    (h \circ \pi^{-1})(\psi_t(x) )=(h \circ \pi^{-1})(\pi(\varphi_t(y)))  = h (\varphi_t(y)) = e.  
\end{equation*}
which means that $\varphi_t(y) \in K$, for all $t \in \R$, that is, $y \in I_1$. This shows that $\pi^{-1}(I_2) \subset I_1$. 

On the other hand, if we consider $z \in I_1$, by the second expression in (\ref{conjug}), we obtain an element $w \in G$ such that $\pi^{-1}(w) = z$. Since $\varphi_t(z) = \varphi_t(\pi^{-1}(w)) \in  K$, we get: 
\begin{equation*}
    \varphi_t(\pi^{-1}(w)) = \pi^{-1}(\psi_t(w)) \in K, \forall t \in \R. 
\end{equation*}

Aplying $h$ in the expression above, we obtain $\psi_t(w) \in S$, for all $t \in \R$, which also give us $w
\in I_2$. This also proves that $\pi(I_1) \subset I_2$, which allow us to conclude that $\pi(I_1) = I_2$. 

Now, let us suppose that $I_1$ is discrete.  If $y \in I_2$, considering $x \in I_1$ such that $\pi(x) = y$ there is a neighborhood $V$ of $x$ such that $V \cap I_1 = \{x\}$. Therefore 
\begin{equation*}
    \{\pi(x)\} = \pi(V \cap I_1)  = \pi(V) \cap \pi(I_1) = \pi(V) \cap I_2 = \{y\}, 
\end{equation*}
with $\pi(V)$ neighborhood of $y$. We can conclude that $I_1$ is discrete if, and only if, $I_2$ is discrete, which proves the equivalence concerning local observability. 

Conjugations preserve fixed points. As a matter of fact, if $\varphi_t(g) = g$ for all $t \in \R$, then 
\begin{equation*}
    \pi(\varphi_t(g)) = \psi_t(\pi(g)) = \pi(g), \forall t \in \R. 
\end{equation*}

The same follows for the fixed points of $\psi$ using the function $\pi^{-1}$. \\
Now, if $Fix(\varphi) \cap K = \{e\}$, take $q \in Fix(\psi) \cap S$. Therefore, $\psi_t(q) = q,$ for all $t \in \R$. We obtain 
\begin{equation*}
    \pi^{-1}(\psi_t(q)) = \varphi_t(\pi^{-1}(q)) = \pi^{-1}(q), \forall t \in \R. 
\end{equation*}

Thus, $\pi^{-1}(q) \in Fix(\varphi)$. As $(h \circ \pi^{-1})(q) = e$, we get $\pi^{-1}(q) \in K$. Then $\pi^{-1}(q) \in K \cap Fix(\varphi) = \{e\}$, implying $q = e$. The same reasoning can be applied using the function $\pi$ at an arbitrary point $g \in Fix(\varphi) \cap K$. 
\end{proof}

From the proof of Proposition 2.6, we can easily derive the following result. 
\begin{corollary}With the same hypothesis of the previous proposition, $I_1$ is discrete if, and only if, $I_2$ is discrete. Also, $Fix(\varphi) \cap K = \{e\}$ if, and only if, $Fix(\psi) \cap S = \{e\}$.      
\end{corollary}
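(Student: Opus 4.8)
The plan is to recognize that both equivalences asserted by the corollary are in fact already verified \emph{inside} the proof of Proposition~\ref{conjug1}, so the task reduces to isolating those two intermediate facts and recording the minimal ingredients that make each one work: namely that $\pi$ is an automorphism of $G$ (hence simultaneously a homeomorphism and a group isomorphism fixing $e$), together with the set equalities $\pi(I_1) = I_2$ and $\pi(K) = S$.

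For the discreteness equivalence, I would first recall from the earlier proof that $\pi(I_1) = I_2$. Since $\pi$ is a homeomorphism, it carries discrete subspaces to discrete subspaces and reflects them through $\pi^{-1}$. Concretely, assume $I_1$ is discrete and take $y \in I_2$; choosing $x \in I_1$ with $\pi(x) = y$ and a neighborhood $V$ of $x$ with $V \cap I_1 = \{x\}$, the image $\pi(V)$ is a neighborhood of $y$ satisfying $\pi(V) \cap I_2 = \pi(V \cap I_1) = \{y\}$. The converse direction is symmetric, using $\pi^{-1}$ and $\pi^{-1}(I_2) = I_1$. Hence $I_1$ is discrete if and only if $I_2$ is.

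For the fixed-point equivalence, I would proceed in two short steps. First, $\pi$ maps $Fix(\varphi)$ onto $Fix(\psi)$: if $\varphi_t(g) = g$ for all $t \in \R$, the first line of (\ref{conjug}) yields $\psi_t(\pi(g)) = \pi(\varphi_t(g)) = \pi(g)$ for all $t$, so $\pi(g) \in Fix(\psi)$, and the reverse inclusion follows identically from the second line of (\ref{conjug}) applied to $\pi^{-1}$. Second, $\pi(K) = S$: by definition $S = \ker(h \circ \pi^{-1})$, so for $x \in G$ one has $\pi(x) \in S$ iff $h(\pi^{-1}(\pi(x))) = h(x) = e$, i.e.\ iff $x \in K$. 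Since $\pi$ is a bijection with $\pi(e) = e$, combining these gives $\pi\bigl(Fix(\varphi) \cap K\bigr) = Fix(\psi) \cap S$, and therefore one intersection equals $\{e\}$ precisely when the other does.

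I expect no genuine obstacle here, as the statement is a direct bookkeeping consequence of properties already established for $\pi$. The only points demanding a little care are to invoke \emph{both} lines of the conjugation relation (\ref{conjug}) so that each inclusion (for the $I$-sets and for the fixed-point sets) is obtained in both directions, and to verify the identity $\pi(K) = S$ against the precise definition $S = \ker(h \circ \pi^{-1})$ rather than assuming it. Beyond that, the argument is purely formal.
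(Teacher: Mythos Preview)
Your proposal is correct and follows essentially the same route as the paper: the corollary is stated there without a separate proof, merely as an immediate consequence of the argument for Proposition~\ref{conjug1}, and you have accurately isolated the relevant pieces (the equality $\pi(I_1)=I_2$, the transfer of discreteness via the homeomorphism $\pi$, and the elementwise fixed-point argument). Your explicit verification that $\pi(K)=S$ is a small clarification beyond what the paper spells out, but the underlying reasoning is the same.
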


\section{Affine 3-dimensional Lie group}

In this section, we explore the observability properties of linear control systems within five classes of solvable, non-nilpotent three-dimensional Lie groups, examining both local and global perspectives. According to the classification provided by  \cite{onish}, the following list presents these groups up to isomorphism:

\begin{itemize}
    \item[1-] $\mathfrak{r}_2 = \R \times_{\theta} \R^2, \theta = \begin{bmatrix}
        0 & 0 \\
        0 & 1
    \end{bmatrix}$.
    \item[2-] $\mathfrak{r}_3 = \R \times_{\theta} \R^2, \theta = \begin{bmatrix}
        1 & 1 \\
        0 & 1
    \end{bmatrix}$.
    \item[3-] $\mathfrak{r}_{3,\lambda} = \R \times_{\theta} \R^2, \theta = \begin{bmatrix}
        1 & 0 \\
        0 & \lambda
    \end{bmatrix}, \hbox{ with } |\lambda| \in (0,1].$
    \item[4-] $\mathfrak{r}_{3,\lambda}' = \R \times_{\theta} \R^2, \theta = \begin{bmatrix}
        \lambda & -1 \\
        1 & \lambda
    \end{bmatrix}, \hbox{ with }\lambda \in \R \setminus\{0\}$.
    \item[5-] $\mathfrak{e} = \R \times_{\theta} \R^2, \theta = \begin{bmatrix}
        0 & -1 \\
        1 & 0
    \end{bmatrix}.$ 
\end{itemize}

All of these algebras can be described as a semi-direct product. The corresponding simply connected Lie groups  $R_2,R_3,R_{3,\lambda}, R'_{3,\lambda}$ and $E$, are constructed through the semi-direct product  $\R\times_{\rho} \R^2$, with $\rho_t = e^{t\theta}$.

\subsection{Linear Vector Fields}

We will begin by demonstrating some properties of the drift. For each $s \in \R$, define $\Lambda_s$ by
\begin{equation*}
    \Lambda_s =\begin{bmatrix}
        s & 0 \\
        0 & e^s-1
    \end{bmatrix}.
\end{equation*}

Consider a linear vector field $\mathcal{X}$ on $G$ and $\mathcal{D}$ the associated derivation. By \cite[Proposition 3.4]{AyalaandAdriano}, there is a linear transformation $\mathcal{D}^*: \R^2 \longrightarrow \R^2$ such that 
\begin{equation*}
    \mathcal{D}(0,v) = (0,\mathcal{D}^*v). 
\end{equation*}

Moreover, if $G$ is simply connected, the vector field $\mathcal{X}$ reads as 
\begin{equation*}
    \mathcal{X}(t,v) = (0,\mathcal{D}^*v + \Lambda_t\xi),
\end{equation*}
with $(0,\xi) = \mathcal{D}(1,0)$. By \cite[Remark 3.2]{AyalaandAdriano}, the solution $\varphi$ is defined through the formula,
\begin{equation*}
    \varphi_s(t,v) = (t,e^{s\mathcal{D}^*}v + F_s \Lambda_t \xi), where
\end{equation*}
\begin{equation*}
    F_s = \sum_{j\geq 1} \frac{s^j (\mathcal{D}^*)^{j-1}}{j!}. 
\end{equation*}

\subsubsection{Fixed points of $\mathcal{X}$}

As mentioned in the second section, analyzing observability requires computing the fixed point of the drift.
Considering the matrix of $\mathcal{D}^*$ described by parameters as follows,
\begin{equation*}
    [\mathcal{D}^*] = \begin{bmatrix}
        a & b \\
        c & d
    \end{bmatrix},
\end{equation*}
the the ordinary differential equations system generated by the vector field $\mathcal{X}$ reads as,
\begin{equation*}
    \begin{array}{ccl}
         \dot{t} &=&0  \\
         \dot{x} &=& ax + by + t\xi_1\\
         \dot{y} &=& cx + dy + (e^t-1)\xi_2,
    \end{array}
\end{equation*}
with $\xi = (\xi_1,\xi_2)$. 

To find the fixed points of $\mathcal{X}$, we have to solve the linear system 
\begin{equation}\label{fixpointsXld}
    \begin{array}{rcl}
         ax + by + t\xi_1 &=& 0\\
         cx + dy + (e^t-1)\xi_2 &=& 0.
    \end{array}
\end{equation}

If $\mathcal{D}^*$ is invertible, we get the solution: 
\begin{equation}\label{fixXafim}
    (x_0, y_0) = \left(\frac{-t\xi_1 d + (e^t-1)\xi_2b}{ad-bc}, \frac{-a(e^t-1)\xi_2 + t \xi_1c}{ad-bc}\right). 
\end{equation}

If $\mathcal{D}^*$ is not invertible, $ad=cb$. And, when $a \neq 0$, we obtain:
\begin{equation*}
    x = \frac{-t\xi_1 - by}{a}, 
\end{equation*}
the solution of (\ref{fixpointsXld}), for every $y \in \R$.  

\subsection{Subgroups of $R_2$}
The Lie algebra of dimension 1 is the real vector space $\R$, and the corresponding Lie groups are the simply connected real line $\R$ and the circle $\mathbb{T}$. 

According to \cite[Chapter 7]{onish}, the only real connected Lie groups of dimension 2 are $$\R^2, \R \times \mathbb{T}, \mathbb{T}^2,  \hbox{Aff}(2,\R),$$ where $\hbox{Aff}(2,\R)$ is the affine group, and $\mathbb{T}^n$ is the $n-$torus. 
The simply connected ones are $\R^2$ and $\hbox{Aff}_2(\R)$.  On the other hand, the only two-  dimensional Lie algebras are the Abelian $\R^2$ endowed with the null-bracket and $\mathfrak{r}_2(\R)$, with the Lie bracket given by $$[(t,x),(s,y)] = (0,ty-sx).$$

Let us begin with $R_2$, viewed as the semi-direct product $\R \times_{\rho}\R^2$, following the specific rules:
\begin{equation}\label{prodaffin}
    (t,(x,y)) \cdot (s,(z,w)) = (t + s, (x+z, y + e^t w )). 
\end{equation}

The 2-dimensional subgroups we will consider are the simply connected ones. The structures that are isomorphic to $\R^2$ or $\hbox{Aff}_2(\R)$, can be summarized in the following list:
\begin{eqnarray*}
    G_1 &=&  \R \times_{\rho} (\R \times \{0\}) \simeq \R^2\\
    G_2 &=&  \R \times_{\rho} (\{0\} \times \R) \simeq \hbox{Aff}_2(\R)\\
    G_3 &=&  \{0\} \times_\rho \R^2 \simeq \R^2. 
\end{eqnarray*}

The simply connected 1-dimensional Lie subgroups are described as follows:
\begin{eqnarray*}
    G_4 &=&  \{0\} \times (\{0\} \times \R)\\
    G_5 &=&  \{0\} \times (\R \times \{0\})\\
    G_6 &=&  \R \times (\{0\} \times \{0\}).
\end{eqnarray*}

\subsubsection{The Subgroup $G_1 =  \R \times (\R \times \{0\})$}
It is easy to show that $G_1 = \{ \R \times (\R \times \{0\})\}$ is a subgroup of $\R \times_{\rho}\R^2$. 
Define a homomorphism $h: \R \times_{\rho}\R^2 \longrightarrow G_1$ determined by,
\begin{equation*}
    h(t,(x,y)) = (\alpha_1 t + \alpha_2 x + \alpha_3 y, \beta_1 t + \beta_2 x + \beta_3 y, 0). 
\end{equation*}
Then, $h$ must satisfy:
\begin{equation*}
    h(t,(x,y))h(s,(z,w)) = h(t+s, (x+z, y + e^tw)). 
\end{equation*}

Using the expression provided above and referring to $h$, we get, 
\begin{eqnarray*}
    \alpha_1 (t+s)+\alpha_2 (x+z)+\alpha_3 (y+w) &=& \alpha_1 (t+s)+\alpha_2 (x+z)+\alpha_3 (y+e^t w)\\
    \beta_1 (t+s)+\beta_2 (x+z) + \beta_3 (y + w) &=& \beta_1 (t+s)+\beta_2 (x+z)+\beta_3 (y+e^t w)
\end{eqnarray*}
which implies in $\alpha_3 = \beta_3 = 0$. Therefore, $h$ reads as:
\begin{equation*}
    h(t,(x,y)) = (\alpha_1 t + \alpha_2 x, \beta_1 t + \beta_2 x, 0).
\end{equation*}

The kernel $K$ of the function $h$ is obtained by deriving the following equations, 
\begin{equation}\label{kerafinne}
    \left\{
    \begin{array}{cc}
        \alpha_1 t + \alpha_2 x &= 0\\
        \beta_1 t + \beta_2 x &= 0.
    \end{array}\right.
\end{equation}

Consider the flows: 
\begin{equation}\label{conjugflows}
\begin{array}{ccc}
    \Sigma_1: \varphi_s(t,v) &=& (t,e^{s\mathcal{D}^*}v), \\
    \Sigma_2: \psi_s(t,v) &=& (t,Pe^{s\mathcal{D}^*}P^{-1}v), 
\end{array}
\end{equation}
where $P \in \hbox{GL}_2(\R)$ is a change of basis matrix. We claim that $\Sigma_1$ and $\Sigma_2$ are $\pi-$conjugated in the Lie subgroup $\{0\} \times_{\rho} \R^2$, with $\pi(t,v) = (t,Pv)$. First, we have; 
\begin{equation*}
    \pi(\varphi_s(t,v)) = \pi(t,e^{s\mathcal{D}^*}v) = (t,Pe^{s\mathcal{D}^*}v), 
\end{equation*}
and, 
\begin{equation*}
    \psi_s(\pi(t,v)) = \psi_s(t,Pe^{s\mathcal{D}^*}P^{-1}(Pv)) =(t,Pe^{s\mathcal{D}^*}v).  
\end{equation*}

Moreover, when $t=0$, it turns out that 
\begin{equation*}
    \pi(0,v)\pi(0,w) = (0,Pv+Pw) = (0,P(v+w))=\pi(0,v+w). 
\end{equation*}
Then, $\pi$ is a homomorphism in $\{0\} \times_{\rho} \R^2$. The equality, $$\varphi_t \circ \pi^{-1} = \pi^{-1} \circ \psi_t,$$ is consistent with the same arguments presented earlier, as well as the definition of $$\pi^{-1}(t,v) = (t,P^{-1}v)$$

For an arbitrary matrix $B = \begin{bmatrix}
    \alpha_1 & \alpha_2 \\
    \beta_1 & \beta_2 
\end{bmatrix}$, we can examine the following cases. 

\subsubsection{Case 1: $\det B \neq 0.$} 

The kernel $ h$ denoted by $K$ is given by, 
\begin{equation}\label{kernelDinvert}
    K = \{(t,(x,y)) \in G: t = x = 0\}. 
\end{equation}

Since $t = 0$, it follows that $\Lambda_t = 0.$ Therefore, we get 
\begin{equation*}
    \varphi_s(0,(x,y)) = (0, e^{s\mathcal{D}^*}(x,y)). 
\end{equation*}

According to \cite[Chapter 1]{perko}, the dynamics of $\mathcal{D}^*$ in $\R^2$ are determined by the matrices:
\begin{equation}\label{formsofD}
    \begin{bmatrix}
        \lambda & 0 \\
        0 & \mu
    \end{bmatrix}, \begin{bmatrix}
        \lambda & 1, \\
        0 & \lambda
    \end{bmatrix} \hbox{ and }
    \begin{bmatrix}
        a & -b \\
        b & a
    \end{bmatrix}. 
\end{equation}

Given two matrices $A,B \in \mathfrak{gl}_2(\R)$, let us consider the relationship: 
\begin{equation*}
    A \simeq B \iff PAP^{-1} = B,
\end{equation*}
for some $P \in GL_2(\R)$. 

\begin{proposition}\label{prop1} The linear pair $(\mathcal{X},\pi_K)$ is locally observable if the associated matrix  $\mathcal{D}^*$ determined by the drift, is conjugated to $\begin{bmatrix}
    a & -b \\
    b & a
\end{bmatrix}$ with $b \neq 0$ or $\begin{bmatrix}
    \lambda & 1 \\
    0 & \lambda
\end{bmatrix}$.
\end{proposition}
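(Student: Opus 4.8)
The plan is to reduce local observability to a single geometric condition relating the kernel line to the spectrum of $\mathcal{D}^*$, and then verify that condition for each of the two listed normal forms. Recall from Section~2 that $(\mathcal{X},\pi_K)$ is locally observable exactly when the indistinguishable class $I=\{g\in G:\varphi_s(g)\in K,\ \forall s\in\R\}$ is discrete, i.e. when its Lie algebra is trivial. So the whole task is to show that $I$ is zero-dimensional for the two prescribed forms.

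First I would pin down $I$ explicitly. Writing $g=(t_0,w)$, the flow preserves the first coordinate, while every element of $K$ in (\ref{kernelDinvert}) has first coordinate $0$; hence $\varphi_s(g)\in K$ for all $s$ forces $t_0=0$. With $t_0=0$ one has $\Lambda_{t_0}=0$, so the solution formula collapses to $\varphi_s(0,w)=(0,e^{s\mathcal{D}^*}w)$. Identifying $K$ with the line $\ell=\{0\}\times\R\subset\R^2$ spanned by $e_2$, this gives $I=\{0\}\times M$ where
\[
M=\{w\in\R^2:\ e^{s\mathcal{D}^*}w\in\ell,\ \forall s\in\R\}
\]
is a linear subspace. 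Thus local observability is equivalent to $M=\{0\}$.

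Next I would extract the key criterion: since $M$ is the largest $\mathcal{D}^*$-invariant subspace contained in the one-dimensional line $\ell$, it is either $\{0\}$ or all of $\ell$, and $M=\ell$ precisely when $\ell$ is $\mathcal{D}^*$-invariant, i.e. when $e_2$ is a real eigenvector of $\mathcal{D}^*$. Therefore $(\mathcal{X},\pi_K)$ is locally observable if and only if $\ell$ is not $\mathcal{D}^*$-invariant. By the $\pi$-conjugation set up in (\ref{conjugflows}) together with Proposition~\ref{conjug1} and its corollary, local observability is preserved under replacing $\mathcal{D}^*$ by a conjugate, so I may test the criterion on the representatives in (\ref{formsofD}). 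For $\mathcal{D}^*\simeq\begin{bmatrix}a&-b\\ b&a\end{bmatrix}$ with $b\neq0$, the eigenvalues $a\pm bi$ are non-real, so $e^{s\mathcal{D}^*}$ is a scaled rotation admitting \emph{no} real invariant line; in particular $\ell$ is not invariant and $M=\{0\}$. For $\mathcal{D}^*\simeq\begin{bmatrix}\lambda&1\\0&\lambda\end{bmatrix}$, the equation $(\mathcal{D}^*-\lambda I)w=0$ forces the second coordinate of $w$ to vanish, so the only eigendirection is $e_1$ (the $x$-axis); as $\ell$ is the $y$-axis it differs from this unique invariant line, whence again $M=\{0\}$. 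In both cases $I$ is discrete and local observability follows.

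I expect the main obstacle to be the bookkeeping in the reduction step, since conjugating $\mathcal{D}^*$ to a normal form simultaneously transports the kernel line, and one must ensure that the quantity being tested (kernel line versus real eigendirection) is exactly what Proposition~\ref{conjug1} preserves. This is automatic for the rotation form, where no real invariant line exists at all, but in the Jordan case it is genuinely needed that the unique eigendirection does not coincide with the line $\ell$ coming from the homomorphism $h$. It is precisely here that the diagonal form is excluded from the statement: for $\mathrm{diag}(\lambda,\mu)$ the line $\ell$ may be an eigenline, giving $M=\ell\neq\{0\}$ and hence failure of local observability.
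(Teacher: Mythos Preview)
Your argument is essentially the paper's: reduce to $t_0=0$, pass to the normal forms of $\mathcal{D}^*$ via the conjugation in (\ref{conjugflows}) and Proposition~\ref{conjug1}, and verify case by case that the orbit $e^{s\mathcal{D}^*}w$ cannot remain in the line $\ell=\{0\}\times\R$ unless $w=0$. The paper carries this out by explicitly writing $\varphi_s(0,(x,y))$ for each form in (\ref{formsofD}) and forcing the first coordinate to vanish for all $s$, while you compress the same computation into the invariant-line criterion ``$M=\{0\}$ iff $\ell$ is not a $\mathcal{D}^*$-eigenline''; the content and the appeal to Proposition~\ref{conjug1} are identical, and the bookkeeping concern you flag in your last paragraph is exactly the point the paper also leaves implicit.
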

\begin{proof}

If $[\mathcal{D}^*]  \simeq \hbox{diag}\{\lambda, \mu\}$, we get 
\begin{equation*}
    \varphi_s(0,(x,y)) = (0, e^{s\lambda}x,e^{s\mu}y). 
\end{equation*}

If $e^{s\lambda }x = 0$ for every $s \in \R$ then $x = 0$. Therefore, $I = K$, which shows by the Proposition (\ref{conjug1}), that $(\mathcal{X}, \pi_K)$ is not locally observable. 

If $[\mathcal{D}^*] \simeq \begin{bmatrix}
    \lambda & 1 \\
    0 & \lambda
\end{bmatrix},$
we obtain:
\begin{equation*}
    \varphi_s(0,(x,y)) = (0, e^{s\lambda}(x + sy),e^{s\lambda}y). 
\end{equation*}

If $e^{s\lambda}(x + sy) = 0$ for all $s \in \R$, it follows tha $x = y = 0$. Then $I = \{(0,0,0)\}$, which allows us to conclude,  using the Proposition (\ref{conjug1}), that $(\mathcal{X}, \pi_K)$ is locally observable. 

If $[\mathcal{D}^*] \simeq \begin{bmatrix}
    a & -b \\
    b & a
\end{bmatrix},$ the solution is given by: 
\begin{equation*}
    \varphi_s(0,(x,y)) = (0,e^{as}(\cos{(sb)}x - \sin{(sb)}y),e^{as}(\sin{(sb)}x + \cos{(sb)}y)).
\end{equation*}

Assume $e^{as}(\cos{(sb)}x - \sin{(sb)}y) = 0$ for every $s \in \R$, and $b \neq 0$. By choosing $s = \frac{\pi}{2b}$ and $s = \frac{\pi}{b}$ we obtain $x = 0$ and $y = 0$. Therefore $I = \{(0,0,0)\}$ and by Proposition (\ref{conjug1}), the pair $(\mathcal{X}, \pi_K)$ is locally observable.   Now, if $b = 0$, we are in the diagonal case. Consequently, $K = I$, showing that the system is not locally observable. 
\end{proof}

\subsubsection{Case 2: $\det B = 0.$}

Assume that the first equation in the system  (\ref{kerafinne}) is true. We have several options for the kernel:
\begin{equation*}
    K =  \{(t,(x,y)) \in G: \alpha_1 t = -\alpha_2 x\}.
\end{equation*}
Precisely,
\begin{equation*}
    K  = 
    \left\{
    \begin{array}{ll}
         G&\alpha_1 = \alpha_2 = 0,  \\
         \{(t,(x,y)) \in G: t = 0\}& \alpha_1 \neq 0, \alpha_2 =0, \\
         \{(t,(x,y)) \in G: x = 0\}& \alpha_1 = 0, \alpha_2 \neq 0, \\
         \left\{(t,(x,y)) \in G: t = \frac{-\alpha_2}{\alpha_1} x\right\}& \alpha_1 \neq 0, \alpha_2 \neq 0. 
    \end{array}
    \right.
\end{equation*}

The case $\alpha_1 = \alpha_2 = 0$ implies that $K = G,$ and $(\mathcal{X}, \pi_K)$ is not observable. The case $\alpha_2 \neq 0$ and $\alpha_1 = 0$ will be discussed in the Remark (\ref{example}). For the remaining possibilities, we present the following proposition. 

\begin{proposition}The pair $(\mathcal{X}, \pi_K)$ is not locally observable.     
\end{proposition}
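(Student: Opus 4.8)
The plan is to reduce the statement to showing that the indistinguishable subgroup
$I = \{p \in G : \varphi_s(p) \in K \text{ for all } s \in \R\}$
is not discrete. Indeed, by the criterion recalled in Section 2, local observability of the pair $(\mathcal{X},\pi_K)$ is equivalent to discreteness of $I$ (equivalently, to the triviality of its Lie algebra), so it suffices to produce a one-parameter subgroup sitting inside $I$. The two kernels left to treat are $K = \{(t,(x,y)) : t = 0\}$, coming from $\alpha_1 \neq 0,\ \alpha_2 = 0$, and $K = \{(t,(x,y)) : t = -\frac{\alpha_2}{\alpha_1}x\}$, coming from $\alpha_1 \neq 0,\ \alpha_2 \neq 0$; both are closed $2$-dimensional subgroups of $G$.

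The key observation is that the one-dimensional subgroup $G_4 = \{0\} \times (\{0\} \times \R)$ is contained in $K$ in each of these cases, since a point $(0,(0,y))$ has $t = 0$ and $x = 0$ at once, and therefore satisfies both defining equations. Moreover $G_4$ is $\varphi$-invariant: it is the derived subgroup $[G,G]$, whose Lie algebra $\gera\{(0,(0,1))\}$ equals $[\mathfrak{g},\mathfrak{g}]$, and a derivation always preserves the derived algebra, so $(0,(0,1))$ is an eigenvector of $\mathcal{D}^*$, say with eigenvalue $d$. I would make this explicit through the flow formula: because $\Lambda_0 = 0$, one has $\varphi_s(0,(0,y)) = (0,\, e^{s\mathcal{D}^*}(0,y)) = (0,(0,e^{sd}y))$, so the entire orbit of a point of $G_4$ remains in $G_4$.

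Putting the two facts together, for every $p \in G_4$ the orbit $\{\varphi_s(p)\}_{s \in \R}$ stays inside $G_4 \subseteq K$, whence $G_4 \subseteq I$. Thus $\dim I \geq 1$, the group $I$ is not discrete, and the pair $(\mathcal{X},\pi_K)$ fails to be locally observable. In the first case one even gets $I = K$, because $\varphi_s$ preserves the first coordinate and hence $\varphi_s(p) \in \{t=0\}$ if and only if $p \in \{t = 0\}$, giving $\dim I = 2$.

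The only delicate point is the second kernel, where $K$ is a slanted plane rather than a coordinate subgroup; there one must exhibit a genuinely $\varphi$-invariant line inside $K$, and the role of $G_4$ is exactly that it is simultaneously annihilated by the two coordinates $t$ and $x$ that cut out $K$ while being stabilized by the flow. I expect verifying this invariance — via the eigenvector property of $\mathcal{D}^*$ on the derived algebra — to be the crux; the inclusion $G_4 \subseteq K$ and the passage from non-discreteness of $I$ to the failure of local observability are then immediate from the definitions and the criterion of Section 2.
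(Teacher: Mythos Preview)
Your argument is correct and, for the second kernel, actually cleaner than the paper's. For $K=\{t=0\}$ both proofs coincide: since the flow preserves the first coordinate one gets $I=K$ immediately. For the slanted kernel $K=\{(t,(x,y)):\alpha_1 t+\alpha_2 x=0\}$, the paper simply asserts that $\varphi_s\!\left(-\tfrac{\alpha_2}{\alpha_1}x,(x,y)\right)\in K$ for all $s$, i.e.\ that $K$ itself is $\varphi$-invariant; but this is not true in general. Indeed, with $\mathcal{D}^*=\diag(a,d)$ one computes the first coordinate of $e^{s\mathcal{D}^*}(x,y)+F_s\Lambda_{t_0}\xi$ to be $e^{sa}x+f_a(s)\,t_0\,\xi_1$, which equals $x$ for all $s$ only under the accidental relation $a\alpha_1=\alpha_2\xi_1$. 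So the paper's claim $I=K$ fails in the generic case.

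Your route sidesteps this entirely: you do not try to show $K$ is flow-invariant, only that it contains a flow-invariant one-parameter subgroup, namely $G_4=\{0\}\times(\{0\}\times\R)$. The inclusion $G_4\subset K$ is trivial since $t=x=0$, and invariance follows from the structural fact that $G_4=[G,G]$ (equivalently, its Lie algebra is the derived ideal), so every derivation sends $(0,(0,1))$ to a multiple of itself and $e^{s\mathcal{D}^*}(0,y)=(0,e^{sd}y)$. Hence $G_4\subset I$ and $I$ is not discrete. This is exactly the repair the paper's second case needs, and your identification of the eigenvector property of $\mathcal{D}^*$ on the derived algebra as the crux is on point.
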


\begin{proof}
First, let us suppose that $\alpha_1 \neq 0$ and $\alpha_2 =0$. The kernel of $h$ is given by
\begin{equation*}
    K = \{(t,(x,y)) \in G: t = 0 \}.
\end{equation*}

So, any point in the form $(0,(x,y))$ belongs to $K$, and the solution $$\varphi_s(0,(x,y)) = (0,e^{s\mathcal{D}^*}(x,y))$$ as well, implying in $I = K$. As $K$ is not discrete, $(\mathcal{X},\pi_K)$ is not locally observable. 

If $\alpha_2 \neq 0$, we obtain
\begin{equation*}
    K =\left\{(t,(x,y)) \in G: t = \frac{-\alpha_2}{\alpha_1}x\right\}. 
\end{equation*}

This means that for every $x \in \R$,  $$\varphi_s(\frac{-\alpha_2}{\alpha_1}x,(x,y))=\left(\frac{-\alpha_2}{\alpha_1}x,e^{s\mathcal{D}^*}(x,y) + F_s \Lambda_{\left(\frac{-\alpha_2}{\alpha_1}x\right)}\xi\right),$$ belongs to $K$. Again, the system in not locally observability.
\end{proof}

\begin{remark}\label{example}
The case $\alpha_1 = 0$ and $\alpha_2 \neq 0$ is unpredictable. Let us discuss some cases. Here,
\begin{equation*}
    K = \{(t,x,y) \in G: x = 0\}. 
\end{equation*}

Consider $\mathcal{D}^*$ in the form
\begin{equation*}
    [\mathcal{D}^*] = \begin{bmatrix}
        0 & 0 \\
        0 & \mu
    \end{bmatrix},
\end{equation*}
for some $\mu \neq 0$. Then 
\begin{equation*}
    e^{s\mathcal{D}^*} = \begin{bmatrix}
        1 & 0 \\
        0 & e^{s\mu}
    \end{bmatrix}. 
\end{equation*}

Hence,
\begin{equation*}
    F_s \Lambda_t = \begin{bmatrix}
        s & 0 \\
        0 & \frac{(e^{\mu s}-1)}{\mu}
    \end{bmatrix}
    \cdot 
    \begin{bmatrix}
        t & 0 \\
        0 & e^t-1
    \end{bmatrix}
    =
    \begin{bmatrix}
        ts & 0 \\
        0 & \frac{(e^t-1)(e^{s\mu}-1)}{\mu}
    \end{bmatrix}. 
\end{equation*}

Thus, 
\begin{equation*}
    \varphi_s(t,0,y) = \left(t,ts\xi_1,e^{s\mu}y + \frac{(e^t-1)(e^{s\mu}-1)}{\mu}\xi_2\right).
\end{equation*}

If $\xi_1 \neq 0$, then $t = 0$, implies that $\varphi_s(0,0,y) = (0,0,e^{s\mu}y)$ still belongs to $K$ and the set $I$ is not discrete. In both cases, $I$ is not discrete. 

If $[\mathcal{D}^*] = \begin{bmatrix}
    0 & 1 \\
    0 & 0 
\end{bmatrix}$, we get 
\begin{equation*}
    e^{s\mathcal{D}^*} = \begin{bmatrix}
        1 & s \\
        0 & 1
    \end{bmatrix}.
\end{equation*}
And,
\begin{equation*}
    F_s \Lambda_t = (sI + \frac{s^2}{2}\mathcal{D}^*)\Lambda_t = \begin{bmatrix}
        st & \frac{s^2}{2}(e^t-1)\\
        0 & s(e^t-1)
    \end{bmatrix}. 
\end{equation*}

Therefore, 
\begin{equation*}
    \varphi_s(t,x,y) = (t,x+sy + st\xi_1 + \frac{s^2}{2}(e^t-1)\xi_2, y + s(e^t-1)\xi_2). 
\end{equation*}
If $(t,x,y) \in K$ such that $\varphi_s(t,x,y) \in K$ for all $s \in \R$, then $x = 0$ and, 
$$sy + st\xi_1 + \frac{s^2}{2}(e^t-1)\xi_2 = 0.$$

Let $\xi \neq 0$. If $t$ and $y$ are non-zero and $s_0,s_1 \in \R\setminus\{0\}$ with $s_1 \neq s_0$, we get:

\begin{equation*}
    y + t\xi_1 + \frac{s_0}{2}(e^t-1)\xi_2 = y + t\xi_1 + \frac{s_1}{2}(e^t-1)\xi_2=0,
\end{equation*}
which must imply $s_0 = s_1$, an absurd. Thus, we conclude: $$y + t\xi_1 + \frac{s}{2}(e^t-1)\xi_2 = 0,$$ if $t = y = 0$, implying $I = \{(0,0,0)\}$. That is, the system is locally observable.

\end{remark}

Considering the previous sections and the expression for the kernel described in (\ref{kernelDinvert}), assume $\mathcal{D}^*$ is invertible. The expression in (\ref{fixXafim}) provides the fixed points of $\varphi$. 

\begin{proposition} Let $\det B \neq 0$ and $\mathcal{D}^*$ is conjugated to $\begin{bmatrix}
    a & -b \\
    b & a
\end{bmatrix}$ with $b \neq 0$ or $\begin{bmatrix}
    \lambda & 1 \\
    0 & \lambda
\end{bmatrix}$. The associated linear pair $(\mathcal{X}, \pi_K)$ is observable.     
\end{proposition}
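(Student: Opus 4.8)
The plan is to invoke the observability criterion recorded in Section 2: the pair $(\mathcal{X}, \pi_K)$ is observable if and only if the subgroup $I$ is discrete and $Fix(\varphi) \cap K = \{e\}$. The first of these two conditions has already been secured. Indeed, under the present hypotheses ($\det B \neq 0$, so that $K$ is given by (\ref{kernelDinvert}), and $\mathcal{D}^*$ conjugate to one of the two listed normal forms), Proposition \ref{prop1} shows that $(\mathcal{X}, \pi_K)$ is locally observable; tracing that proof one in fact obtains $I = \{(0,0,0)\}$, which is certainly discrete. Thus the whole problem reduces to verifying the fixed-point condition $Fix(\varphi) \cap K = \{e\}$.

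To this end I would use the explicit description of the fixed points already at hand. Since $\mathcal{D}^*$ is invertible, the fixed set $Fix(\varphi)$ is exactly the curve $\{(t,(x_0(t),y_0(t))) : t \in \R\}$, where $(x_0(t),y_0(t))$ is the pair furnished by (\ref{fixXafim}). The key structural observation is that the kernel $K$ from (\ref{kernelDinvert}) forces $t = 0$, as every element of $K$ has vanishing first coordinate. Hence any element of $Fix(\varphi) \cap K$ must be the fixed point corresponding to the parameter value $t = 0$.

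It then suffices to evaluate (\ref{fixXafim}) at $t = 0$. Both numerators carry the factors $t$ and $e^{t}-1$, each of which vanishes at $t = 0$; equivalently $\Lambda_0 = 0$, so that the affine term $F_s\Lambda_t\xi$ disappears and the fixed point degenerates to the origin of $\R^2$. Consequently $x_0(0) = y_0(0) = 0$, and the only element of $Fix(\varphi)$ with $t = 0$ is $(0,(0,0)) = e$. Therefore $Fix(\varphi) \cap K = \{e\}$, and combining this with the discreteness of $I$ yields observability through the criterion.

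I expect no serious obstacle here: the argument amounts to the remark that $K$ pins the first coordinate to zero, while the fixed-point curve meets the slice $t = 0$ only at the identity. The one point demanding a little care is confirming that the hypotheses genuinely place us in the invertible regime so that (\ref{fixXafim}) really is the full fixed-point set: the complex form has determinant $a^{2}+b^{2} > 0$ when $b \neq 0$, and the Jordan form has determinant $\lambda^{2}$, which is nonzero precisely because the standing assumption is that $\mathcal{D}^*$ is invertible, forcing $\lambda \neq 0$.
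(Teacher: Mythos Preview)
Your proposal is correct and follows essentially the same line as the paper's proof: invoke Proposition~\ref{prop1} for local observability (discreteness of $I$), then use that $K$ forces $t=0$ together with the explicit fixed-point formula (\ref{fixXafim}) to see $x_0=y_0=0$, whence $Fix(\varphi)\cap K=\{e\}$. Your write-up is in fact a bit more careful than the paper's, spelling out why $\mathcal{D}^*$ is invertible in each case and why the fixed set is a curve parametrized by $t$.
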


\begin{proof}The local observability was already proved in the Proposition (\ref{prop1}). By hypothesis $\det B \neq 0$. If $\mathcal{D}^* \simeq\begin{bmatrix}
    \lambda & 1 \\
    0 & \lambda 
\end{bmatrix}$, it follows that,
\begin{equation*}
    Fix (\varphi) \cap K = \{(0,0,0)\},
\end{equation*}
If $t =0$, we get $x_0 = y_0 = 0$, which implies global observability. 

On the other hand, if $\mathcal{D}^* = \begin{bmatrix}
    a & -b \\
    b & a
\end{bmatrix}$ with $b \neq 0$, the intersection between the fixed point of the drift with the kernel of the homomorphism is trivial. Implying by the Proposition (\ref{conjug1}) that the system is observable. 
\end{proof}

\subsubsection{Subgroup $G_2 = \R \times_{\rho} (\{0\} \times \R)$}

Next, we consider the subgroup $G_2 = \R \times_{\rho} \{0\} \times \R$. Let $h: G \longrightarrow G_2$ given by 
\begin{equation*}
    h(t,x,y) = (\alpha_1 t + \alpha_2 x + \alpha_3 y, 0, \beta_1 t + \beta_2 x + \beta_3 y).
\end{equation*}

The homomorphism $h$ must satisfy the following condition:
\begin{equation*}
    h(t,x,y)h(s,z,w) = h(t + s, x+y, y + e^tw). 
\end{equation*}
Therefore, 
\begin{eqnarray*}
    \alpha_1 (t+s)+\alpha_2 (x+z)+\alpha_3 (y+w) &=& \alpha_1 (t+s)+\alpha_2 (x+z)+\alpha_3 (y+e^t w)\\
    \beta_1 t+\beta_2 x + \beta_3 y + e^{\alpha_1 t+\alpha_2 x+\alpha_3 y}( \beta_1 s+\beta_2z + \beta_3 w) &=& \beta_1 (t+s)+\beta_2 (x+z)+\beta_3 (y+e^t w).
\end{eqnarray*}

The first equality gives $\alpha_3 = 0$. If we choose $h$ to be non-zero, the second equality gives $\beta_1 = \beta_2 = 0$ and $\alpha_1 = 1$, $\alpha_2 =0$. Therefore, $h$ has the form: 
\begin{equation*}
    h(t,x,y) = (t,0, \beta_3 y), 
\end{equation*}
with $\beta_3 \neq 0$, for every $(t,x,y), (s,z,w) \in G.$ The case $\beta_3 = 0$ will be discussed in Remark (\ref{beta3=0}). 

The kernel of $h$ reads as,
\begin{equation}
    \ker h = \{(t,x,y) \in G: y = t= 0\}.
\end{equation}

With the previous analysis, we get the following result. 

\begin{proposition}\label{bneq0}Considering the conjugation in (\ref{conjugflows}) and the possible forms of $\mathcal{D}^*$, the only case for $\mathcal{D}^*$ being locally observable is: 
\begin{equation*}
    [\mathcal{D}^*] \simeq\begin{bmatrix}
        a & - b\\
        b & a
    \end{bmatrix}, 
\end{equation*}
with $b \neq 0$. 
\end{proposition}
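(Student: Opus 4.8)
The plan is to reproduce the scheme of Proposition \ref{prop1}, adapted to the output map for $G_2$, whose kernel was computed above to be $K=\{(t,x,y)\in G : y=t=0\}$, i.e. the $x$-axis of the fiber $\{0\}\times_{\rho}\R^2$. Local observability is equivalent to the indistinguishable class $I=\{p\in G:\varphi_s(p)\in K,\ \forall s\in\R\}$ being discrete, so I must decide, normal form by normal form, whether $\varphi_s(p)\in K$ for all $s$ forces $p$ to be the identity. The first reduction is immediate: since $\varphi_s(t,v)=(t,e^{s\mathcal{D}^*}v+F_s\Lambda_t\xi)$ fixes the $\R$-coordinate, membership in $K$ forces $t=0$; and then $\Lambda_0=0$ gives $\varphi_s(0,(x,y))=(0,e^{s\mathcal{D}^*}(x,y))$, so the condition to remain in $K$ is exactly that the second component of $e^{s\mathcal{D}^*}(x,y)$ vanish for every $s$.

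At this point I would invoke the conjugation (\ref{conjugflows}) together with Proposition \ref{conjug1} to place $\mathcal{D}^*$ in one of the three canonical forms of (\ref{formsofD}), and then run the case analysis. For $\mathcal{D}^*\simeq\diag\{\lambda,\mu\}$ the second component equals $e^{s\mu}y$, which vanishes for all $s$ iff $y=0$ while $x$ stays free, so $I=\{(0,x,0):x\in\R\}=K$ is not discrete. The decisive contrast with Proposition \ref{prop1} is the Jordan case $\begin{bmatrix}\lambda&1\\0&\lambda\end{bmatrix}$: being upper triangular, its second component is again $e^{s\lambda}y$, depending only on $y$, so the entire $x$-axis lies in $I$ and local observability fails here as well, unlike for $G_1$ where the off-diagonal $1$ coupled $x$ into the observed coordinate. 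Finally, for $\begin{bmatrix}a&-b\\b&a\end{bmatrix}$ the second component is $e^{as}(\sin(sb)\,x+\cos(sb)\,y)$; setting this to zero and evaluating at $s=0$ and $s=\frac{\pi}{2b}$ (possible precisely because $b\neq0$) yields $y=0$ and then $x=0$, so $I=\{(0,0,0)\}$ is discrete and the pair is locally observable. The subcase $b=0$ collapses to the diagonal form and is excluded, giving exactly the claimed characterization.

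The heart of the matter, and the single point one must not get wrong, is the Jordan case, where it is tempting to import the affirmative conclusion from Proposition \ref{prop1}. The clean way to see why the answer flips is that, for the $G_2$ output, the unobservable subspace in the fiber is $\ker\begin{bmatrix}0&1\\c&d\end{bmatrix}$, where $c,d$ denote the bottom row of $[\mathcal{D}^*]$; thus the observed coordinate couples to the remaining variable only through the $(2,1)$-entry $c$, which among the three canonical forms is nonzero solely for the rotation block (there $c=b$). I would also state the conjugation step with some care, since conjugating $\mathcal{D}^*$ to normal form transports the kernel as well: it is Proposition \ref{conjug1} and its corollary on the preservation of discreteness that legitimize testing each canonical form directly, and alternatively one may bypass the reduction entirely by computing the rank of $\begin{bmatrix}0&1\\c&d\end{bmatrix}$ in the working basis.
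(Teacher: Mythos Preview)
Your proof is correct and follows essentially the same approach as the paper: reduce to $t=0$, compute $e^{s\mathcal{D}^*}(x,y)$ in each of the three canonical forms, and read off whether the second coordinate can vanish identically. The paper presents the negative cases slightly more tersely by starting from a point $(0,x,0)$ already in $K$ and verifying it stays there, but the case analysis and the conclusions match yours; your added remarks on the conjugation transporting the kernel (via Proposition~\ref{conjug1}) and on the $(2,1)$-entry governing the coupling are valid refinements that the paper leaves implicit.
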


\begin{proof} Let us suppose at first $\mathcal{D}^*\simeq \diag\{\lambda, \mu\}$. Then 
\begin{equation*}
    \varphi_s(0,x,0) = (0,e^{s\lambda}x,0) \in K, \forall (s,x) \in \R^2,  
\end{equation*}
which shows that $I$ is not discrete. 

If $[\mathcal{D}^*] \simeq 
\begin{bmatrix}
\lambda & 1 \\
0 & \lambda
\end{bmatrix}$, the solution through $(0,x,0)$ is given by 
\begin{equation*}
    \varphi_s(0,x,0) = (0,e^{s\lambda}x,0), 
\end{equation*}
which remains in $K,$ for every pair $(s,x) \in \R^2$. Thus, the system is not locally observable. 

Now, if $[\mathcal{D}^*] \simeq \begin{bmatrix}
    a & -b \\
    b & a
\end{bmatrix}$, the solution in $(0,x,0)$ is given by 
\begin{equation*}
    \varphi_s(0,x,0) = (0,e^{as}\cos{(bs)}x, e^{as}\sin{(bs)}x). 
\end{equation*}

To ensure that $e^{as}\sin{(bs)}x = 0$ for all $s \in \R$, we must consider to possibilities for  $b$. If $b =0$, then we recover the diagonal case. If $b \neq 0$, $\sin{(bs)}x = 0$ for all $s \in \R$ if and only if $x = 0$. Therefore, $I = \{(0,0,0)\}$ if and only if $b \neq 0$, as claimed.  
\end{proof}

Based on the earlier proposition, we can draw the following conclusion.

\begin{proposition}In the same settings of the Proposition (\ref{bneq0}), the pair $(\mathcal{X}, \pi_K)$ is observable. 
\end{proposition}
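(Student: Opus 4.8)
The plan is to invoke the observability criterion recalled in Section~2, which asserts that $(\mathcal{X},\pi_K)$ is observable precisely when the indistinguishable subgroup $I$ is discrete and $Fix(\varphi)\cap K=\{e\}$. The first condition, local observability, was already settled in Proposition~\ref{bneq0}: under the standing hypothesis $[\mathcal{D}^*]\simeq\begin{bmatrix} a & -b \\ b & a\end{bmatrix}$ with $b\neq 0$, it was shown there that $I=\{(0,0,0)\}$, which is trivially discrete. Consequently the only remaining work is to verify the global condition $Fix(\varphi)\cap K=\{e\}$, after which Proposition~\ref{conjug1} lets us transfer the conclusion from the normal form back to $\mathcal{D}^*$ itself.

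To carry this out, first I would record the kernel computed just above, namely $K=\{(t,x,y)\in G : t=y=0\}$, so that any candidate fixed point lying in $K$ automatically satisfies $t=0$ and $y=0$. Next I would use the fixed-point system (\ref{fixpointsXld}): for the representative $[\mathcal{D}^*]=\begin{bmatrix} a & -b \\ b & a\end{bmatrix}$ these equations read $ax-by+t\xi_1=0$ and $bx+ay+(e^t-1)\xi_2=0$. Substituting $t=0$ (so that $\Lambda_t=0$ and the inhomogeneous terms $t\xi_1$ and $(e^t-1)\xi_2$ both vanish) together with $y=0$ collapses the system to $ax=0$ and $bx=0$. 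Since $b\neq 0$, the second equation forces $x=0$, so the identity $(0,0,0)$ is the only fixed point contained in $K$.

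The structural reason this is immediate is that $b\neq 0$ makes $\mathcal{D}^*$ invertible, its eigenvalues $a\pm bi$ being nonzero, and invertibility is preserved under the conjugation of (\ref{conjugflows}); once a fixed point is forced into the slice $t=0$, the defining relation degenerates to $\mathcal{D}^* v=0$, whose only solution is $v=0$. Having verified both $I$ discrete and $Fix(\varphi)\cap K=\{e\}$, the criterion yields observability for the normal form, and Proposition~\ref{conjug1} transports it to the original pair $(\mathcal{X},\pi_K)$.

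I do not anticipate a genuine obstacle, since the dynamical content was absorbed into the local-observability statement of Proposition~\ref{bneq0}; what remains is the short verification that the drift has no nontrivial fixed point inside the one-parameter kernel $K$. The one point deserving care is to confirm that membership in $K$ really does force $t=0$, so that the reduction of (\ref{fixpointsXld}) to the homogeneous equation $\mathcal{D}^* v=0$ is legitimate and the invertibility of $\mathcal{D}^*$ can be brought to bear.
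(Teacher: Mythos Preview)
Your proposal is correct and follows essentially the same approach as the paper: both use that membership in $K$ forces $t=0$, whence the fixed-point system (\ref{fixpointsXld}) reduces to $\mathcal{D}^* v=0$, and invertibility of $\mathcal{D}^*$ (guaranteed by $b\neq 0$) yields $v=0$. The paper's write-up is more terse---it simply invokes (\ref{fixpointsXld}) and notes that $t=0$ gives $(x_0,y_0)=(0,0)$---whereas you spell out the substitution and the role of Proposition~\ref{conjug1}, but the underlying argument is the same.
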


\begin{proof} Let us consider the expression in (\ref{fixpointsXld}). If $t = 0$, we get $(x_0,y_0) = (0,0)$. We can easily conclude that $Fix(\varphi) \cap K = \{(0,0,0)\}$.    
\end{proof}

\begin{remark}\label{beta3=0}The case when $\beta_3 = 0$ implies that 
\begin{equation*}
    \ker h = \{(t,x,y) \in G: t = 0\}. 
\end{equation*}

The solution is given by, 
\begin{equation*}
    \varphi_s(0,x,y) = (0,e^{s\mathcal{D}^*}(x,y)). 
\end{equation*}
In particular, $I = K$ and consequently, the system can not be locally observable.
\end{remark}

\subsubsection{Subgroup $G_3 = \{0\} \times_{\rho} \R^2$}

Let $h: G \longrightarrow G_3$ be a homomorphism in the form, 
\begin{equation*}
    h(t,x,y) = (0,\alpha_1 t + \alpha_2 x + \alpha_3 y, \beta_1 t + \beta_2 x + \beta_3 y). 
\end{equation*}
Since, $h(t,x,y)h(s,z,w) = h(t+s,x+z,y+e^tw)$, we get: 
\begin{eqnarray*}
    \alpha_1 (t+s)+\alpha_2 (x+z)+\alpha_3 (y+w) &=& \alpha_1 (t+s)+\alpha_2 (x+z)+\alpha_3 (y+e^t w)\\
    \beta_1 (t+s)+\beta_2 (x+z) + \beta_3 (y+w)&=& \beta_1 (t+s)+\beta_2 (x+z)+\beta_3 (y+e^t w).
\end{eqnarray*}

As a consequence, we obtain $\alpha_3 = \beta_3 = 0$. Therefore, 
\begin{equation*}
    h(t,x,y) = (0,\alpha_1 t + \alpha_2 x , \beta_1 t + \beta_2 x). 
\end{equation*}

The kernel of $h$ is given by the solutions of the linear system: 
\begin{equation*}
\left\{
\begin{array}{ccc}
    \alpha_1 t + \alpha_2 x &=& 0\\
    \beta_1 t + \beta_2 x &=& 0.
\end{array}\right.
\end{equation*}

If the matrix $B = \begin{bmatrix}
    \alpha_1 & \alpha_2 \\
    \beta_1 & \beta_2 
\end{bmatrix}$, is invertible, the kernel of $h$ is given by 
\begin{equation*}
    \ker h = \{(t,x,y) \in G: t = x = 0\}. 
\end{equation*}

If $B$ is not invertible, $\alpha_1 \beta_2 = \alpha_2 \beta_1$. Therefore 
\begin{equation*}
    \ker h = \{(t,x,y) \in G: \alpha_1 t = -\alpha_2 x\}. 
\end{equation*}

Next, let us consider the following subcases:
\begin{equation}\label{subcaseskernel}
\ker h= \left\{
    \begin{array}{ll}
         G& \alpha_1 = \alpha_2 = 0.  \\
        \{(t,x,y) \in G: x = 0\}& \alpha_1 = 0, \alpha_2 \neq 0.\\
        \{(t,x,y) \in G: t = 0\} & \alpha_1 \neq 0, \alpha_2 = 0. \\
        \{(t,x,y) \in G: t = \frac{-\alpha_2}{\alpha_1 }x\} & \alpha_1 \neq 0 \neq \alpha_2. 
    \end{array}\right.
\end{equation}

Concerning to local observability, we can derive the following results.
\begin{proposition}If $B$ is invertible, $(\mathcal{X}, \pi_K)$ is locally observable when $[\mathcal{D}^*]$ is conjugated to 
\begin{equation*}
    \begin{bmatrix}
        \lambda & 1 \\
        0 & \lambda
    \end{bmatrix} \hbox{ or } \begin{bmatrix}
        a & - b\\
        b & a
    \end{bmatrix}, b \neq 0. 
\end{equation*}    
\end{proposition}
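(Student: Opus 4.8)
The plan is to recognize that, for $B$ invertible, this is literally the situation already resolved in Case 1 of $G_1$. The kernel computed above is
$$K = \{(t,x,y) \in G : t = x = 0\},$$
which is exactly the same subgroup of $G$ as the kernel displayed in (\ref{kernelDinvert}). So my first move would be to invoke Remark \ref{samekernel}: the homomorphism onto $G_1$ in Case 1 and the present homomorphism onto $G_3$ share one and the same kernel, hence their pairs have identical indistinguishable classes $I$ and identical intersections $Fix(\varphi)\cap K$. Consequently the local observability of $(\mathcal{X},\pi_K)$ here is \emph{equivalent} to the local observability already settled in Proposition \ref{prop1}, and the stated conclusion follows immediately.

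For a self-contained argument I would instead compute directly. Since $t=0$ on $K$ forces $\Lambda_t = 0$, the flow restricts to $\varphi_s(0,x,y) = (0,e^{s\mathcal{D}^*}(x,y))$, so a point $(0,x,y)$ lies in $I$ precisely when the first spatial coordinate of $e^{s\mathcal{D}^*}(x,y)$ vanishes for every $s\in\R$. Using the conjugation (\ref{conjugflows}) together with Proposition \ref{conjug1}, I may assume $[\mathcal{D}^*]$ is in one of the three canonical forms of (\ref{formsofD}) and analyze each.

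In the Jordan case $[\mathcal{D}^*]\simeq\begin{bmatrix}\lambda & 1\\ 0 & \lambda\end{bmatrix}$ the first coordinate is $e^{s\lambda}(x+sy)$, whose vanishing for all $s$ forces $x=y=0$. In the complex case $[\mathcal{D}^*]\simeq\begin{bmatrix}a & -b\\ b & a\end{bmatrix}$ with $b\neq 0$ the first coordinate is $e^{as}(\cos(bs)x-\sin(bs)y)$; evaluating at $s=\frac{\pi}{2b}$ and $s=\frac{\pi}{b}$ decouples the two equations and yields $y=0$ and $x=0$. In both cases $I=\{(0,0,0)\}$ is discrete, so by Proposition \ref{conjug1} the pair is locally observable. (The diagonal form gives only $e^{s\lambda}x=0\Rightarrow x=0$, leaving $I=K$ nondiscrete, which is precisely why it is excluded from the statement.)

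The main obstacle is genuinely minor: it is the bookkeeping in the complex case, where one must select the sampling times $s$ so that the single scalar constraint separates into $x=0$ and $y=0$, and confirm that $b\neq 0$ is exactly the hypothesis making this separation possible. Everything else is a direct transcription of the $G_1$ analysis, which is why the cleanest route is simply to appeal to Remark \ref{samekernel} and Proposition \ref{prop1}.
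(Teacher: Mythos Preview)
Your proposal is correct and follows essentially the same approach as the paper: a direct case analysis of $e^{s\mathcal{D}^*}$ acting on elements of $K=\{(0,0,y)\}$ in each of the three canonical forms, showing $I=\{(0,0,0)\}$ exactly in the Jordan and complex ($b\neq 0$) cases. Your additional observation that one may bypass the computation entirely by invoking Remark~\ref{samekernel} and Proposition~\ref{prop1} (since the kernel coincides with (\ref{kernelDinvert})) is a clean shortcut the paper does not make explicit, but the underlying computation is identical.
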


\begin{proof}If $\mathcal{D}^* \simeq \begin{bmatrix}
    \lambda & 0 \\
    0  & \mu
\end{bmatrix}$, we have 
\begin{equation*}
    \varphi_s(0,0,y) = (0,0,e^{s\mu}y) \in K, \forall s \in \R, 
\end{equation*}
which implies that $I$ is not discrete. 

If $[\mathcal{D}^*] \simeq \begin{bmatrix}
    \lambda & 1 \\
    0 & \lambda
\end{bmatrix}$, we obtain  
\begin{equation*}
    \varphi_s(0,0,y) = (0,e^{\lambda s}sy, y). 
\end{equation*}
If $e^{\lambda s}sy = 0$ for all $s \in \R$, then $y = 0$. Therefore, $I = \{(0,0,0)\}$
proving local observability. 

If $[\mathcal{D}^*] \simeq \begin{bmatrix}
    a & -b \\
    b & a
\end{bmatrix}$, with $b \neq 0$,  then
\begin{equation*}
    \varphi_s(0,0,y) = (0,-e^{as}\sin{(sb)}y, e^{as}\cos{(bs)}y).
\end{equation*}

If $\varphi_s(0,0,y) \in K$ for all $s \in \R$, we  get $e^{as}\sin{(sb)}y = 0$, for all $s \in \R.$ That is, $\sin{(sb)}y = 0,$ for all $s \in \R.$ Choosing $s = \frac{\pi}{2b}$, we get $y = 0$. Therefore, $I = \{(0,0,0)\}$, and the system is locally observable. If $b = 0$, we recover the diagonal case, which indicates that the system is not locally observable.
\end{proof}

Due to matrix B not being invertible, we can draw the following conclusion.
\begin{proposition}\label{t=0}Assume $B$ is not invertible. In cases $1$, $2$ and $4$ for $\ker h$ in (\ref{subcaseskernel}), the pair $(\mathcal{X}, \pi_K)$ is not locally observable.     
\end{proposition}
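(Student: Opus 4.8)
The plan is to show that in each of the three listed cases the indistinguishable subgroup
$$I=\{g\in G:\varphi_s(g)\in K,\ \forall s\in\R\}$$
fails to be discrete, which by the local-observability criterion of Section 2 (local observability corresponds exactly to $I$ being discrete) is the negation of local observability. Since $I=\bigcap_{s\in\R}\varphi_s^{-1}(K)$ is a closed subgroup of $G$, it suffices in each case to exhibit a nontrivial one-parameter family lying in $I$ and accumulating at the identity.

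The single structural fact I would lean on is that the explicit solution $\varphi_s(t,x,y)=(t,\,e^{s\mathcal{D}^*}(x,y)+F_s\Lambda_t\xi)$ leaves the first coordinate $t$ invariant; that is, $t$ is a first integral of the drift, so the whole orbit of $(t_0,x_0,y_0)$ stays in the slice $\{t=t_0\}$, and moreover $\Lambda_0=0$ gives the simplification $\varphi_s(0,x,y)=(0,e^{s\mathcal{D}^*}(x,y))$. Case $1$ is then immediate: here $K=G$, hence $\varphi_s^{-1}(K)=G$ for every $s$, so $I=G$ is certainly not discrete.

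For the two remaining cases I would first reduce $\mathcal{D}^*$ to one of the normal forms in (\ref{formsofD}) through the $\pi$-conjugation in (\ref{conjugflows}), invoking Proposition (\ref{conjug1}) so that local observability is preserved, and then produce an explicit $\varphi$-invariant line inside $K$. For Case $4$, where $K=\{\alpha_1 t+\alpha_2 x=0\}$, the natural candidate is the $y$-axis $\{(0,0,y):y\in\R\}\subset K$: because $\Lambda_0=0$ its orbit is $\varphi_s(0,0,y)=(0,e^{s\mathcal{D}^*}(0,y))$, so the condition $\varphi_s(0,0,y)\in K$ reduces to the vanishing, for all $s$, of the first $\R^2$-coordinate of $e^{s\mathcal{D}^*}(0,y)$. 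In the diagonal normal form this coordinate is identically zero, so the whole $y$-axis lies in $I$ and discreteness fails. Case $2$, with $K=\{x=0\}$, I would treat by the same line together with an expansion of $F_s\Lambda_t\xi$ in order to account for the contribution of $\xi=(\xi_1,\xi_2)$.

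The main obstacle is exactly this last verification. Since $\varphi_s$ fixes $t$ but genuinely evolves the $x$-coordinate, the defining equation of $K$ in Cases $2$ and $4$ couples $t$ and $x$, so the orbit of a kernel point need not remain in $K$ merely because the initial point does. The delicate step is therefore to show that the first $\R^2$-coordinate of $e^{s\mathcal{D}^*}(x,y)+F_s\Lambda_t\xi$ can be kept constant along a continuum of initial conditions; this is where the choice of normal form for $\mathcal{D}^*$ (diagonal versus Jordan versus rotational) and its interaction with $\xi$ must be examined term by term in the power series defining $F_s$, and it is the point I expect to require the most care.
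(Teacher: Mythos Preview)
The central problem is a misreading of which three cases the proposition treats. Although the statement reads ``cases $1$, $2$ and $4$'', the paper's own proof handles the cases $\alpha_1=\alpha_2=0$, then $\alpha_1\neq0,\alpha_2=0$, then $\alpha_1\neq0\neq\alpha_2$; in the numbering of (\ref{subcaseskernel}) these are cases $1$, $3$ and $4$, with kernels $K=G$, $K=\{t=0\}$ and $K=\{t=-\tfrac{\alpha_2}{\alpha_1}x\}$. Case $2$ of the list, $K=\{x=0\}$, is \emph{not} covered here: the sentence immediately after the proof sends it to Remark~\ref{example}, where it is shown that local observability there genuinely depends on $\mathcal{D}^*$ and $\xi$. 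So your plan to establish non-local-observability for $K=\{x=0\}$ is aimed at a claim that is false in general, and the ``delicate step'' you flag at the end is not a technical obstacle but the reason that case is excluded.

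With the correct cases in hand the argument is much shorter than your normal-form route, and rests solely on the observation you already made: $\varphi_s$ fixes the first coordinate. For $K=\{t=0\}$ this gives $\varphi_s(0,x,y)=(0,e^{s\mathcal{D}^*}(x,y))\in K$ for every $s$, hence $I=K=\{0\}\times\R^2$ is two-dimensional and certainly not discrete; no reduction of $\mathcal{D}^*$, no choice of a special line, and no use of Proposition~\ref{conjug1} are needed. For $K=\{t=-\tfrac{\alpha_2}{\alpha_1}x\}$ the paper proceeds the same way, writing
\[
\varphi_s\!\left(-\tfrac{\alpha_2}{\alpha_1}x,\,x,\,y\right)=\left(-\tfrac{\alpha_2}{\alpha_1}x,\;e^{s\mathcal{D}^*}(x,y)+F_s\Lambda_{(-\alpha_2 x/\alpha_1)}\xi\right)
\]
and asserting that this lies in $K$ for all $s$, so that again $I=K$. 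Your instinct that the moving $x$-coordinate makes this membership nontrivial is correct, and the paper's line here is terse; but the intended strategy is still this direct invariance argument on the whole kernel, not a case split over the Jordan type of $\mathcal{D}^*$.
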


\begin{proof}The case $\alpha_1 = \alpha_2 = 0$ is trivial. Considering $\alpha_1 \neq 0$ and $\alpha_2 = 0$, we have,
\begin{equation*}
    \varphi_s(0,x,y)  = (0,e^{s\mathcal{D}^*}(x,y)) \in K, \forall s \in \R. 
\end{equation*}

Therefore, $I = \{0\} \times \R^2$, which is not discrete. Finally, if $\alpha_1 \neq 0$ and $\alpha_2 \neq 0$, considering the point $\left(\frac{-\alpha_2}{\alpha_1}x,x,y\right) \in K$. It turns out,
\begin{equation*}
    \varphi_s\left(\frac{-\alpha_2}{\alpha_1}x,x,y\right) =\left(\frac{-\alpha_2}{\alpha_1}x,e^{s\mathcal{D}^*}(x,y) + F_s \Lambda_{\left(\frac{-\alpha_2}{\alpha_1}x\right)}\xi\right) \in K, \forall s \in \R,  
\end{equation*}
which also implies $I = K$, ending the proof. 
\end{proof}

The case $\ker h = \{(t,(x,y)) \in G: x = 0\}$ is discussed in the Remark (\ref{example}). \\
When $B$ is invertible, we establish the following proposition. 

\begin{proposition}The pair $(\mathcal{X},\pi_K)$ is observable if $\mathcal{D}^*$ is conjugated to 
\begin{equation}\label{lastprop}
    \begin{bmatrix}
        \lambda & 1 \\
        0 & \lambda 
    \end{bmatrix} \hbox{ or }\begin{bmatrix}
        a & -b \\
        b & a
    \end{bmatrix}, b \neq 0. 
\end{equation}
\end{proposition}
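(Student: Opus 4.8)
The plan is to invoke the observability criterion recalled in Section 2 (Theorem 2.5 of \cite{AyalaHaci}): the pair $(\mathcal{X},\pi_K)$ is observable exactly when $I$ is discrete and $Fix(\varphi)\cap K=\{e\}$. The discreteness of $I$, i.e. local observability, has already been established for both admissible forms of $\mathcal{D}^*$ in the preceding proposition treating the invertible-$B$ case. Hence the whole argument collapses to verifying the single algebraic condition $Fix(\varphi)\cap K=\{e\}$.

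Since $B$ is invertible, $K=\{(t,x,y)\in G: t=x=0\}=\{(0,0,y):y\in\R\}$, so every element of $K$ satisfies simultaneously $t=0$ and $x=0$. On the other hand, $Fix(\varphi)$ is the equilibrium set of the ODE generated by $\mathcal{X}$, that is, the solution set of system (\ref{fixpointsXld}). I would therefore substitute the two defining constraints of $K$ directly into (\ref{fixpointsXld}). Setting $t=0$ annihilates the inhomogeneous terms $t\xi_1$ and $(e^t-1)\xi_2$, reducing (\ref{fixpointsXld}) to the homogeneous system $[\mathcal{D}^*](x,y)^{\top}=0$; imposing in addition $x=0$ leaves precisely the two scalar equations obtained by letting the second column of $[\mathcal{D}^*]$ act on $y$.

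It then remains to inspect the two canonical forms. For $[\mathcal{D}^*]\simeq\begin{bmatrix}\lambda&1\\0&\lambda\end{bmatrix}$ the $(1,2)$-entry equals $1$, and for $[\mathcal{D}^*]\simeq\begin{bmatrix}a&-b\\b&a\end{bmatrix}$ with $b\neq0$ it equals $-b$; in both cases this entry is nonzero. The corresponding scalar equation then reads $(\text{nonzero})\cdot y=0$, forcing $y=0$, so the only element of $K$ fixed by $\varphi$ is the identity $(0,0,0)$. Equivalently, when $\mathcal{D}^*$ is invertible one reads off the same conclusion from the closed form (\ref{fixXafim}): on the slice $t=0$ it yields the unique equilibrium $(x_0,y_0)=(0,0)$. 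In either reading $Fix(\varphi)\cap K=\{e\}$, which together with the local observability already proved gives observability.

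I expect the only delicate points to be bookkeeping rather than substance. First, the statement is phrased up to conjugacy, so one must legitimately replace $\mathcal{D}^*$ by its canonical representative; this is exactly what Proposition \ref{conjug1} provides through the conjugation (\ref{conjugflows}) with $\pi(t,v)=(t,Pv)$, observability being a conjugation invariant. Second, formula (\ref{fixXafim}) presupposes $\mathcal{D}^*$ invertible, which fails for the Jordan form when $\lambda=0$; I would sidestep this by relying on the direct substitution of the second paragraph, which never uses invertibility, so that the nilpotent case $\lambda=0$ is absorbed into the same one-line computation.
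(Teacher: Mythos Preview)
Your proof follows essentially the same line as the paper's: with $B$ invertible one has $K=\{t=x=0\}$, and since on the slice $t=0$ formula (\ref{fixXafim}) collapses to $(x_0,y_0)=(0,0)$, the intersection $Fix(\varphi)\cap K$ is trivial, which combined with the already-established local observability gives the claim. One caveat on your extra care with $\lambda=0$: in the direct-substitution paragraph you read off the $(1,2)$-entry of the \emph{canonical} form while still intersecting with the original $K=\{(0,0,y)\}$, but under the conjugation of Proposition~\ref{conjug1} the kernel is simultaneously carried to $S=\pi(K)$, so those two pieces do not match; the paper itself simply leans on invertibility of $\mathcal{D}^*$ through (\ref{fixXafim}) and does not single out $\lambda=0$.
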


\begin{proof} As a matter of fact, $$\ker h = \{(t,x,y) \in G: t=x=0\}.$$ So, $Fix(\varphi) = \{(0,0,0)\}$ if $t = 0$. And, $K \cap Fix(\varphi) = \{(0,0,0)\}$ for both matrices in (\ref{lastprop}).     
\end{proof}

\subsubsection{Other subgroups}

Consider the group $G_4 = \{0\} \times \{0\} \times \R$. For the homomorphism $h: G \longrightarrow G_4$, we get 
\begin{equation*}
    h(t,x,y) = (0,0,\alpha t + \beta x). 
\end{equation*}
Therefore,
\begin{equation}
\ker h = \left\{
    \begin{array}{ll}
         G& \alpha = \beta = 0.  \\
        \{(t,x,y) \in G: x = 0\}& \alpha = 0, \beta \neq 0.\\
        \{(t,x,y) \in G: t = 0\} & \alpha \neq 0, \beta = 0. \\
        \{(t,x,y) \in G: t = \frac{-\beta}{\alpha}x\} & \alpha \neq 0 \neq \beta.
    \end{array}\right.
\end{equation}
The cases mentioned are the same as those in (\ref{subcaseskernel}). In particular, for the cases $1$, $2$, and $4$ the corresponding pairs are not locally observable. The case $3$ depends on the matrix $\mathcal{D}^*$ (see Remark (\ref{example})). It is not hard to conclude the same for the subgroups $G_5 = \{0\} \times \R \times \{0\}$ and $G_6 = \R \times \{0\}\times\{0\}$.

\subsection{Subgroups of $R_3$}

Next we deal with $R_3$, which is the semi-direct product $\R\rtimes_{\rho} \R^2$, endowed with the product: 
\begin{equation*}
    (t,x,y) \cdot (s,z,w) = (t +s, x + e^t(z +tw), y +e^tw). 
\end{equation*}

Using the same reasoning applied in $R_2$, the subgroups of $R_3$ are as follows:
\begin{eqnarray*}
    G_1 &=& \R \times_{\rho} \R\times \{0\}\\
    G_2 &=& \{0\} \times_{\rho} \R^2 \\
    G_3 &=& \{0\} \times_{\rho} \{0\}\times \R\\
    G_4 &=& \{0\} \times_{\rho} \R \times \{0\}\\
    G_5 &=& \R \times_{\rho} \{0\}^2.
\end{eqnarray*}

We notice that $G_1$ is isomorphic to $\hbox{Aff}(2,\R)$, $G_2$ to $\R^2$, $G_3, G_4$ and $G_5$ are isomorphic to $\R$. 

\subsubsection{The Subgroup $G_1$}

Considering $h: R_3 \longrightarrow G_1$ a homomorphism in the form 
\begin{equation*}
    h(t,x,y) = (\alpha_1 t +\alpha_2 x +\alpha_3 y, \beta_1 t +\beta_2 x +\beta_3 y, 0), 
\end{equation*}
by the expression 
\begin{equation*}
    h(t,x,y)h(s,z,w) = h(t+s,x+e^{t}(z + tw), y +e^t w), 
\end{equation*}
we obtain $\alpha_1 = 1, \alpha_2 = \alpha_3 = \beta_1 = \beta_2 = 0$. Therefore, 
\begin{equation*}
    h(t,x,y) = (t, \beta_3 y,0),
\end{equation*}
whose kernel is given by 
\begin{equation*}
    \ker h= \left\{\begin{array}{lc}
         \{(t,x,y) \in R_3:t = y = 0\},& \beta_3 \neq 0  \\
         \{(t,x,y) \in R_3: t = 0\},& \beta_3 = 0. 
    \end{array}\right.
\end{equation*}

The case $\beta_3 \neq 0$ is explained in the Proposition (\ref{bneq0}). The case $\beta_3 = 0$ is analyzed in Proposition (\ref{t=0}). 

\subsubsection{The Subgroup $G_2$}

Consider $h:R_3 \longrightarrow G_2$, in the form
\begin{equation*}
    h(t,x,y) = (0,\alpha_1 t +\alpha_2 x +\alpha_3 y, \beta_1 t +\beta_2 x +\beta_3 y).
\end{equation*}
Hence, $\alpha_2 = \alpha_3 = \beta_2 = \beta_3 = 0$. Thus, 
\begin{equation}\label{G_2R_2}
    h(t,x,y) = (0,\alpha_1 t, \beta_1 t). 
\end{equation}

 The kernel of $h$ reads as 
\begin{equation}\label{t=000}
    \ker h = \left\{\begin{array}{cc}
         R_3,& \alpha_1 = \beta_1 = 0.  \\
         \{(t,x,y) \in R_3: t=0\},& \hbox{ otherwise.} 
    \end{array}\right.
\end{equation}
which also leads to the Proposition (\ref{t=0}). 

\subsubsection{The Subgroup $G_3$}

The homomorphisms between $R_3$ and $G_3$ are given by 
\begin{equation*}
    h(t,x,y) = (0,0,\alpha t). 
\end{equation*}
Here, the kernel is as in (\ref{t=000}). The same behavior observed with homomorphisms also applies to the subgroups $G_4,G_5$ and $G_6$. 

\subsection{The Subgroups of $R_{3,\lambda}$}

The Lie group $R_3$ is the set $\R \times_{\rho}\R^2$ endowed with the semi-direct product 
\begin{equation*}
    (t,x,y) \cdot (s,z,w) = (t + s, x +e^tz, y +e^{\lambda t}w). 
\end{equation*}

Let us consider the subgroups: 
\begin{eqnarray*}
    G_1 &=& \{0\} \times \R^2\\
    G_2 &=& \R \times \R \times \{0\}\\
    G_3 &=& \R\times \{0\} \times \R\\
    G_4 &=& \R\times \{0\}^2\\
    G_5 &=& \{0\} \times \R \times \{0\}\\
    G_6 &=& \{0\} \times \{0\} \times \R.
\end{eqnarray*}
Obviously, $G_4, G_5$ and $G_6$ are isomorphic to $\R$. The subgroup $G_2$ is diffeomorphic to the set $\hbox{Aff}(2,\R)$ and $G_1$ is diffeomorphic to $\R^2$. 

Finally, the subgroup $G_3$ can also be considered as the plane $\R^2$ endowed with the product, $$(t,x)\cdot(s,y) = (t +s, x +e^{\lambda t}y).$$ If $h: \hbox{Aff}(2,\R) \longrightarrow G_3$, is defined by: $$h(t,x) = (\lambda^{-1} t, x),$$ it is not hard to prove that $h$ is an isomorphism. Therefore, $G_3$ is also isomorphic to $\hbox{Aff}(2,\R)$. 

We conclude this section by examining the observability properties through the previously listed subgroups.

\subsubsection{The Subgroup $G_1$}

Considering the same pattern for the homomorphisms between the group and its subgroups, the homomorphism $h:R_{3,\lambda} \longrightarrow G_1$ is given by, 
\begin{equation*}
    h(t,x,y) = (0,\alpha_1 t, \beta_1 t), 
\end{equation*}
which is the same case for the homomorphism in (\ref{G_2R_2}). 

\subsubsection{Subgroup $G_2$}

The homomorphisms between $R_{3,\lambda}$ and $G_2$ are given by functions $h: R_{3,\lambda} \longrightarrow G_2$ defined by 
\begin{equation*}
    h(t,x,y) = (t,\beta_2 x,0). 
\end{equation*}

Therefore, 
\begin{equation*}
    \ker h = \left\{\begin{array}{cc}
         \{(t,x,y) \in R_{3,\lambda}:t = x = 0\},& \beta_2 \neq 0  \\
         \{(t,x,y) \in R_{3,\lambda}:t=0\}& 
    \end{array}\right.
\end{equation*}

The case $\beta_2 \neq 0$ was explored in the expression (\ref{kernelDinvert}) and the Proposition (\ref{prop1}). 

\subsubsection{The Subgroup $G_3$}

The subgroup $G_3$ is isomorphic to $G_2$. Hence, taking $$h(t,x,y) = (\lambda^{-1}t,\beta_2x,0)$$ the cases for the kernel of $h$ are the same as in the $G_2$ case. 

\subsubsection{The Subgroup $G_4$}

The homomorphisms are given by the functions $h: R_{3,\lambda} \longrightarrow G_4$ defined by, 
\begin{equation*}
    h(t,x,y) = (\alpha_1 t,0,0). 
\end{equation*}
It turns out that,
\begin{equation*}
    \ker h = \left\{\begin{array}{lc}
         R_{3,\lambda},& \alpha_1 = 0.  \\
         \{(t,x,y) \in R_{3,\lambda}: t=0\},& \alpha_1 \neq 0. 
    \end{array}
    \right.
\end{equation*}
This led to the cases we have previously discussed. It is easy to conclude that the subgroups $G_5$ and $G_6$ share the same results.

\subsection{The Subgroups of $R'_{3,\lambda}$ and $E$}

Here, we will deal only with $R'_{3,\lambda}$, since $E$ has the same structure with $\lambda = 0$. 

It is easy to confirm that the only existing subgroups are as follows:
\begin{eqnarray*}
    G_1 &=& \{0\} \times \R^2, \\
    G_2 &=& \R \times \{0\}^2. 
\end{eqnarray*}

Just observe that $\{0\} \times \R \times \{0\}$ and $\{0\} \times \{0\} \times \R$, are the same as $G_2$.

\subsubsection{The Subgroup $G_1$}

The homomorphisms $h: R_{3, \lambda}' \longrightarrow G_1$ $$h(t,x,y) = (0,\alpha_1 t + \alpha_2 x + \alpha_3 y, \beta_1 t + \beta_2 x + \beta_3 y),$$ reads as:
\begin{equation*}
    h(t,x,y) = (0,\alpha_1 t,\beta_2 t), 
\end{equation*}
which follows the form outlined in equation (\ref{G_2R_2}) and has already been studied. 

\subsubsection{The Subgroup $G_2$}

For this case, the standard form is given by, $$h(t,x,y) = (\alpha_1 t + \alpha_2 x + \alpha_3 y, 0,0).$$ By applying the product rule, we find $\alpha_2 = \alpha_3 = 0$, resulting in a scenario similar to  (\ref{t=000}). 

\section{Acknowledgements}
The author Alexandre J. Santana is partially supported by CNPq grant n. 309409/2023-3.

\section{Conclusion}

This paper examines the conditions for observability in non-nilpotent solvable three-dimensional Lie groups. It focuses on the relationships between the group structure, the kernels of homomorphisms involving simply connected subgroups, and the dynamics of linear vector fields. By combining the analysis of Lie group homomorphisms with the properties of the drift, we identified criteria that depend not only on the algebraic structure of the Lie group but also on the specific characteristics of the linear vector field being considered. 

We also explored the conditions under which observability is unattainable, providing specific examples related to the corresponding derivation of the drift. This study proposes new avenues for future research, such as investigating observability in higher-dimensional or more complex Lie groups and developing computational methods to apply these criteria in practical scenarios.


\begin{thebibliography}{99}
\bibitem{Aouadi}Aouadi, M., Guerine, S. \emph{Observability and attractors of nonlinear Von Kármán beams.} Nonlinear Differential Equations and Applications. Vol. 30. (2023)

\bibitem{Ayalaetal} Ayala, V.; Hacibekiroglu, A.; Kizil, E. \emph{Observability of General Linear Pairs. }Computer $\&$ mathematics with applications. Elsevier. vol 39. (2000).  

\bibitem{AyalaandAdriano} Ayala, V.; da Silva, A. \emph{On the characterization of the controllability property for linear control systems on nonnilpotent, solvable three-dimensional Lie groups} Journal of Differential Equations. vol. 266. p.  8233-8257. (2019)

\bibitem{AyalaHaci} Ayala, V.; Hacibekiroglu, A. \emph{Observable Linear Pairs. }Computation and Applied Mathematics. v. 13. p. 205-214. (1997). 

\bibitem{adj}V. Ayala et al. / J. Differential Equations 269 (2020) 449–466

\bibitem{adrianoheis} Da Silva, A., Kizil, E., Duman, O. \emph{ Linear Control systems on Homogeneous Spaces of the Heisenberg Group. } Journal of Dynamical and Control Systens. Springer. (2024)

\bibitem{DathAndJouan} Dath, M.; Jouan, P. \emph{Controllability of Linear Systems on Low Dimensional
Nilpotent and Solvable Lie Groups. }Journal of Dynamical and Control Systems. Springer. (2014)  

\bibitem{JiangWang}Jiang, C., Wang, B., Fu, S. et al. \emph{Observability of Periodically Switched Boolean Control Networks. }J Syst Sci Complex 36, 985–1001 (2023).

\bibitem{kalman}Kalman, R.E.\emph{On the general theory of control systems.} IFAC Proceedings, Vol. 1, (1960), 491–502.

\bibitem{kalman1}Kalman, R.E. \emph{Mathematical Description of Linear Dynamical Systems.} J.S.I.A.M. Control. vol 1, (2),
(1963), 152–192.

\bibitem{perko} Perko, L. \emph{Differential Equations and Dynamical Systems. }Springer. (2001).

\bibitem{onish} Onishchik, A., L.; Vinberg, E., B. \emph{Lie groups and Lie algebras.} Springer-Verlag. (1993) 

\bibitem{SanMartin} San-Martin, L.A.B. \emph{Lie groups.} Springer. (2017).

\bibitem{Sivalingam}Sivalingam, S.M., Govindaraj, V. \emph{Observability of Time-Varying Fractional Dynamical Systems with Caputo Fractional Derivative. }Mediterr. J. Math. 21, 76 (2024).

\bibitem{sontag}Sontag, E. \emph{Mathematical Control Theory.} Springer New York, NY. 1998

\bibitem{ZhouGuoLiu}Zhou, R., Guo, Y., Liu, X. et al. \emph{Finite-Time Observability of Probabilistic Logical Control Systems. }J Syst Sci Complex 36, 1905–1926 (2023).



\end{thebibliography}
\end{document}